%
%
%
%
%
\RequirePackage{fix-cm}
\documentclass[smallextended]{svjour3}       
%
%

\usepackage{amsthm}
\usepackage{graphicx}
\usepackage{subfigure}
\usepackage{threeparttable}
\usepackage{amsmath}
\usepackage{color}
\usepackage{amssymb}
\usepackage{algorithmic}
\usepackage{algorithm}
\usepackage{array}
\usepackage{lipsum}
\usepackage{graphicx}
\usepackage{epstopdf}
\usepackage{ragged2e} 
\usepackage{booktabs,makecell, multirow, tabularx}
\usepackage[misc]{ifsym}
\newtheorem{assumption}{Assumption}
\numberwithin{example}{section} 
%
%
%
%
%
\begin{document}

\title{Smoothing Iterative Consensus-based Optimization Algorithm for Nonsmooth Nonconvex Optimization Problems with Global Optimality
}

\titlerunning{SICBO Algorithm for Nonsmooth Nonconvex Optimization Problems}        

\author{Jiazhen Wei        \and
	Wei Bian\textsuperscript{*} 
}


\institute{Jiazhen Wei \at
	School of Mathematics, Harbin Institute of Technology, Harbin 150001, China \\
	\email{jiazhenwei98@163.com}           
	\and
	Wei Bian (Corresponding author)\at
	School of Mathematics, Harbin Institute of Technology, Harbin 150001, China\\
	\email{bianweilvse520@163.com}
}


\date{Received: date / Accepted: date}

\maketitle

\begin{abstract}
In this paper, we focus on finding the global minimizer of a general unconstrained nonsmooth nonconvex optimization problem. Taking advantage of the smoothing method and the consensus-based optimization (CBO) method, we propose 
a novel smoothing iterative consensus-based optimization (SICBO) algorithm. First, we prove that the solution process of the proposed algorithm here exponentially converges to a common stochastic consensus point almost surely. Second,  we establish a detailed theoretical analysis to ensure the small enough error between the objective function value at the consensus point and the optimal function value, to the best of our knowledge, which provides the first theoretical guarantee to the global optimality of the proposed algorithm for nonconvex optimization problems. Moreover,
unlike the previously introduced CBO methods, the theoretical results are valid for the cases that the objective function is nonsmooth, nonconvex and perhaps non-Lipschitz continuous. 
Finally, several numerical examples are performed to illustrate the effectiveness of our proposed algorithm for solving the global minimizer of the nonsmooth and nonconvex optimization problems.
\keywords{Nonsmooth nonconvex optimization \and Smoothing method \and Non-Lipschitz optimization \and Iterative consensus-based optimization (CBO) algorithm \and  Convergence and error estimation}
\subclass{90C26 \and 37N40 \and 65K05}
\end{abstract}

\section{Introduction}
In the past several decades, nonsmooth and nonconvex optimization has played an important role in the fields of image processing and machine learning. Even in some minimization models, the objective functions include some non-Lipschitz regularization terms. However, most nonconvex optimization problems are typically NP hard to solve optimally. As a rule, the local minimizers to nonconvex optimization problems are not unique. 
Traditional optimization methods for solving such problems only can obtain local minimizers or even stationary points generally, while in many applications, global minimizers or approximate global minimizers are often required. 

Most existing global optimization algorithms can be divided into deterministic methods and stochastic methods. The branch and bound (BB) method \cite{MorrisonBranch2016} is one of the deterministic methods. It has been successfully applied to certain types of global optimization problems, including concave programming, DC programming, etc. Hansen \cite{HansenGlobal1980} introduced an interval algorithm by combining the Moore-Skelboe  algorithm \cite{MooreInterval1966} and the BB algorithm. Its main advantage is the ability to find all global minimizers to optimization problems within a given range of error accuracy. The tunneling method \cite{LevyThe1985} and the filled function method \cite{GeA1990} are classified as the auxiliary function approaches, which mainly study how to jump out of the local minimas or stationary points, and are quite good at local searching. The authors in \cite{PandiyaNon2021} proposed the non parameter-filled function and indicated that it can examine the global minimizer effectively by some experiments. However, these algorithms all have high requirements for the analytical properties of the problems, such as monotonicity, equicontinuity, Lipschitz differentiability and so forth. Furthermore, they can not guarantee to find the global minimizers.
Stochastic methods utilize probabilistic mechanisms rather than deterministic sequences to describe iterative process. Swarm intelligence (SI) algorithms are trustworthy global stochastic optimization methods that are characterized by simple principle and easy realization. Motivated by various group behaviors in nature, the SI algorithm utilizes exploring the search space and exploiting the current knowledge to update iterations. Particle swarm optimization (PSO) \cite{ThangarajParticle2011}, ant colony optimization (ACO) \cite{MohanA2012} and artificial bee colony (ABC) algorithm \cite{KarabogaA2014} are a few examples of well-known SI methods.

Lately, a novel SI model, namely the consensus-based optimization (CBO) algorithm, was introduced to look for a global minimizer $x^\ast$ of the smooth function $f:\mathbb{R}^d\rightarrow\mathbb{R}_+$ in \cite{PinnauA2016}. 
Let $x^i(t)=(x^i_1(t),\ldots,x^i_d(t))\in\mathbb{R}^d$ be the position of the $i$-th particle at time $t$. $\{W_l(t)\}_{l=1}^d$ is independent standard Brownian motion. $e_l$ is the column vector in $\mathbb{R}^d$ with $1$ at the $l$-th element and $0$ for the others. 
Ha et al. \cite{HaConvergence2020} studied the following variant of CBO algorithm:
\begin{equation}\label{cbo}
	\begin{cases}
		\displaystyle \mathrm{d}x^i(t)=-\lambda(x^i(t)-\bar{x}^{\star}(t))\mathrm{d}t+\sigma\sum_{l=1}^d(x_l^i(t)-\bar{x}^{\star}(t))\mathrm{d}W_l(t)e_l,~ i=1,\ldots,N,\\
		\displaystyle \bar{x}^{\star}(t):=\dfrac{\sum_{i=1}^Nx^i(t)\mathrm{e}^{-\beta f(x^i(t))}}{\sum_{i=1}^N\mathrm{e}^{-\beta f(x^i(t))}},
	\end{cases}
\end{equation}
where $\lambda>0$ and $\sigma>0$ control the drift towards $\bar{x}^{\star}(t)$ and the influence of the stochastic component, respectively, and $\beta$ is also a positive constant. 

The CBO algorithm \eqref{cbo} is a gradient-free algorithm, which does not need the gradient of the objective function. It updates the particle's position starting from an initial set of particles $\{x^i(0)\}_{i=1}^N$ to explore the objective function landscape and finally concentrates around the global minimizer. Specifically, in \eqref{cbo}, the first term $-\lambda(x^i(t)-\bar{x}^{\star}(t))\mathrm{d}t$ is a drift term, where particles are attracted and concentrated around $\bar{x}^\star$. And a stochastic search term that is modeled as an independent Brownian motion makes up the second term $\sigma\sum_{l=1}^d(x_l^i(t)-\bar{x}^{\star}(t))\mathrm{d}W_l(t)e_l$. Particles close to $\bar{x}^{\star}$ exhibit lesser randomness, while particles away from $\bar{x}^{\star}$ exhibit more noise in their search paths, which prompts them to seek a greater area. It is notable that the noise here is the same for all particles, which is a simplified version of the system in \cite{CarrilloA2019}. However, it can be still proved that the sufficient condition for the convergence of \eqref{cbo} is dimensional independent. Moreover, the term $\mathrm{e}^{-\beta f(x^i(t))}$ in the definition of $\bar{x}^{\star}(t)$ is the Gibbs distribution corresponding to  objective function $f(x^i(t))$ and temperature $\frac{1}{\beta}$. 

There exists a considerable number of works exploring the improved CBO algorithms to solve optimization problem faster and more efficiently, e.g. \cite{BorghiConsensus2023,FornasierConsenseus2021,KoConvergence2022,2020Consensus}. Most methods use the corresponding mean-field limit to understand the convergence of the algorithm or lack rigorous theoretical analysis. It is important to note that by letting $N\rightarrow\infty$, particles are considered to behave independently according to the same law, which makes the large system of equations reduce to a single partial differential equation: the so-called mean-field model. However, the convergence study on corresponding mean-field limit does not imply the convergence of the CBO method per se \cite{HaConvergence2020}.  So it was stated in \cite{Carrillo2023} that directly studying the finite particle system without passing to the mean-field limit is of particular interest. We also refer the readers to \cite{TotzeckTrends2022} for an overview of current development.

Most recently, Ha et al. \cite{HaConvergence2021} showed 
a generalized discrete scheme of \eqref{cbo} and gave the convergence analysis without using the mean-field limit. The algorithm reads as follows:
\begin{equation*}
	\begin{cases}
		\displaystyle 
		x^{i,k+1}=x^{i,k}-\gamma\left( x^{i,k}-\bar{x}^{\star,k}\right) -\sum^d_{l=1}\left( x_l^{i,k}-\bar{x}^{\star,k}_l\right) \eta_l^k e_l,~i=1,\ldots,N,\\
		\displaystyle 
		\bar{x}^{\star,k}=\left( \bar{x}^{\star,k}_1,\ldots,\bar{x}^{\star,k}_d\right)^\mathrm{T} =\frac{\sum_{i=1}^N x^{i,k}\mathrm{e}^{-\beta f(x^{i,k})}}{\sum_{i=1}^N \mathrm{e}^{-\beta f(x^{i,k})}},
	\end{cases}
\end{equation*}
where  
the random variables $\{\eta_l^k\}_{k,l}$ are i.i.d with
\begin{equation*}
	\mathbb{E}[\eta_l^k]=0,\quad \mathbb{E}[|\eta_l^k|^2]=\zeta^2,\quad \zeta\geq0,\quad \forall k\geq0,\quad l=1,\ldots,d.
\end{equation*}
On the basis of \cite{HaConvergence2021}, Ha et al. \cite{HaTime2024} proposed a discrete momentum consensus-based optimization (Momentum-CBO) algorithm and provided a sufficient framework to guarantee the convergence of algorithm on the objective function values towards the global minimum. Nonetheless, in \cite{HaTime2024,HaConvergence2020,HaConvergence2021}, the convergence was proved only for the case that the objective function is smooth. Ha et al. \cite{HaConvergence2020} stated that it will be interesting to relax the regularity of the objective function to the less regular objective function, at least continuous one.

Inspired by the above works, this paper is interested in solving the following minimization problem 
\begin{equation}\label{problem}
	\min_{x\in\mathbb{R}^d} f(x),
\end{equation}
where $f:\mathbb{R}^d\rightarrow\mathbb{R}_+$ is nonsmooth, nonconvex and perhaps even non-Lipschitz continuous, and we assume the solution of \eqref{problem} is unique, denoted by $x^*$.
We employ the smoothing techniques since the theoretical analysis of the algorithm is challenging due to the non-smoothness of the objective function. Smoothing techniques which use a class of smoothing functions to solve the nonsmooth problems by the problem's structure are a series of efficient ways to overcome nonsmoothness in optimization. Chen \cite{ChenSmoothing2012} considered a class of smoothing methods for minimization problems where the feasible set is convex but the objective function is not convex and not differentiable. Bian and Chen \cite{BianA2020} proposed a smoothing proximal gradient algorithm for solving a class of constrained optimization problems with the objective function defined by the sum of a nonsmooth convex function and a cardinality function. Moreover, for a continuous function, there is a framework to construct its smooth approximation functions by convolution \cite{ChenSmoothing2012,HiriartConvex1993,RockafellarVariational1998}.

Inspired of the discrete CBO method and smoothing techniques, we propose a variant of discrete CBO algorithm, namely smoothing iterative CBO (SICBO) algorithm (see Algorithm \ref{alg-cbo}) and give its convergence analysis. More precisely, we firstly prove that there is a common consensus point $x_\infty$ such that iterative sequence generated by the SICBO algorithm exponentially converges to it in the suitable sense. And we discuss the optimality of the stochastic consensus point that $f(x_\infty)$ can be close to $f(x^*)$ as much as possible under some reasonable assumptions. Moreover, we also present an error estimate of the SICBO algorithm with respect to the global minimum. It is important to highlight that in the convergence analysis, compared to the results of \cite{HaConvergence2021}, the condition of the objective function $f$ is relaxed drastically from $f\in C^2_b$ to nonsmooth or even not Lipschitz continuous, where we call $f\in C^2_b$, if $f$ is a twice continuously differentiable function with bounded derivatives up to second-order. 

We summarize the main contributions of this paper as follows.
\begin{itemize}
	\item[\textbullet] We present a variant of discrete CBO algorithm and provide the convergence analysis without resorting to the corresponding mean-field model. Taking advantage of smoothing methods, this paper achieves a breakthrough by proposing an effective iterative CBO algorithm for solving global minimizer of a nonconvex, nonsmooth and possible not Lipschitz continuous optimization problem with theoretical analysis. Moreover, this work also provides a good starting point for studying the convergence
	of the iterative CBO algorithm under weaker assumptions of the objective function.
	\item[\textbullet] We demonstrate that the solution process generated by the algorithm exponentially converges to a common point $x_\infty$ in expectation and almost surely converges to $x_\infty$ (see Theorem \ref{th-exp}). 
	\item[\textbullet] We provide a more detailed error estimation on the objective function values at the iterates of the  proposed algorithm towards its global minimum (see Corollary \ref{co-err}), which provides an important guidance for its application. To the best of our knowledge, there is no literature providing relevant theoretical proof on this topic yet. 
\end{itemize}

The remaining part of the paper proceeds as follows: Section \ref{sec2} is devoted to the introduction of some notations and preliminaries. In Section \ref{sec3}, we present the SICBO algorithm for solving problem \eqref{problem}. In Section \ref{sec4}, we study the global consensus of the proposed algorithm. The corresponding stochastic convergence results and the error estimates towards the global minimum are discussed in Section \ref{sec5}. In Section \ref{sec6}, we provide several numerical simulations to justify the efficiency of the proposed SICBO algorithm and the better performance of it comparing with other deterministic methods. Finally, Section \ref{sec7} gives the conclusions.

\section{Notations and preliminaries}\label{sec2}

\textbf{Notations}: The $d$-dimensional real-valued vector space is identified with $\mathbb{R}^d$ and $\mathbb{R}^d_+=[0,+\infty)^d$. $e_l$ is the column vector in $\mathbb{R}^d$, which takes the value $1$ at the $l$-th element and $0$ for the others. $\left\| \cdot\right\|$ denotes the $\ell_2$ norm of the corresponding vector or matrix, and $\left\| \cdot\right\|_1 $ denotes the $\ell_1$ norm. For the random variables $X$ and $Y$ in the probability space $(\Omega,\mathcal{B},\mathbb{P})$, $X\sim Y$ means both $X$ and $Y$ follow the same distribution. $X\sim \mathcal{N}(m,s^2)$ denotes that $X$ follows a normal distribution with mean $m$ and variance $s^2$. $\mathbb{E}[X]$ indicates the expectation of $X$. $\mathcal{P}_{ac}(\mathbb{R}^d)$ denotes the space of Borel probability measures that are absolutely continuous w.r.t the Lebesgue measure on $\mathbb{R}^d$.

In what follows, we present a brief summary of some relevant concepts and theorems, which are the key tools for designing algorithms and providing theoretical analysis. At the beginning, we recall the definitions of the stochastic consensus, including $L^p$ and almost surely (a.s.) global consensus.
\begin{definition}\label{def-con}
	Let $\{X_k\}_{k\geq 0}=\left\{x^{i,k}:1\leq i\leq N\right\}_{k\geq 0}$ be a discrete stochastic process.
	\begin{itemize}
		\item[{\rm(i)}] The stochastic process $\{X_k\}_{k\geq 0}$ exhibits  $L^p$ global consensus, $p\geq 1$, if
		$$\lim_{k\rightarrow\infty}\max_{i,j}\mathbb{E}\left[ \left\|x^{i,k}-x^{j,k} \right\|^p \right]=0.$$
		\item[{\rm(ii)}] The stochastic process $\{X_k\}_{k\geq 0}$ exhibits almost surely global consensus, if
		$$\lim_{k\rightarrow\infty} \left\|x^{i,k}-x^{j,k} \right\|=0, ~~\forall 1\leq i,j\leq N,~a.s..$$
	\end{itemize}
\end{definition}

The first result that we require is related to the martingale and its almost-sure convergence theorem. 
\begin{definition}[\cite{ResnickA1999}]\label{def-mar}
	Given integrable random variables $\{X_k\}_{k\geq 0}$ and $\sigma$-fields $\{\mathcal{B}_k\}_{k\geq 0}$, $\{(X_k,\mathcal{B}_k)\}_{k\geq 0}$ is a martingale if
	\begin{itemize}
		\item[{\rm(i)}] $\mathcal{B}_0\subset\mathcal{B}_1\subset\mathcal{B}_2\subset\ldots$;
		\item[{\rm(ii)}] $X_k$ is adapted in the sense that for each $k$, $X_k\in\mathcal{B}_k$, that is, $X_k$ is $\mathcal{B}_k$-measurable;
		\item[{\rm(iii)}] for $0\leq l<k$, $\mathbb{E}[X_k|\mathcal{B}_l]=X_l, ~\mbox{a.s.}$.
	\end{itemize}
\end{definition}
\begin{theorem}[Doob's martingale convergence theorem \cite{ResnickA1999}]\label{th-martingale}
If martingale $\{(X_k,\mathcal{B}_k)\}_{k\geq 0}$ satisfies $\sup_k\mathbb{E}[|X_k|]<\infty$, then there exists $X_\infty\in L_1$ such that $X_k\rightarrow X_\infty$ a.s..
\end{theorem}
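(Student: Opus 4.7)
The plan is to prove this via Doob's classical upcrossing argument, which converts the $L^{1}$-boundedness hypothesis into almost-sure convergence, followed by a Fatou-type estimate to place the limit in $L^{1}$. The overall strategy has two clean halves: first show that $X_k(\omega)$ cannot oscillate infinitely often across any rational interval almost surely, and then bound the size of its pointwise limit.

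For the oscillation step, I would introduce, for each pair of rationals $a<b$, the number $U_k([a,b])$ of upcrossings of $[a,b]$ completed by $X_0,\ldots,X_k$. The heart of the matter is Doob's upcrossing inequality
\[
(b-a)\,\mathbb{E}\bigl[U_k([a,b])\bigr] \;\leq\; \mathbb{E}\bigl[(X_k-a)^{+}\bigr] \;\leq\; |a|+\mathbb{E}[|X_k|].
\]
Since $\sup_k\mathbb{E}[|X_k|]<\infty$ by hypothesis, the right-hand side is uniformly bounded in $k$. Monotone convergence in $k$ then gives $\mathbb{E}[U_\infty([a,b])]<\infty$, hence $U_\infty([a,b])<\infty$ almost surely. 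Taking a countable union over the rational pairs $a<b$, on a set of full probability the event $\{\liminf_k X_k<a<b<\limsup_k X_k\}$ fails for every $a,b\in\mathbb{Q}$, which forces $\liminf_k X_k=\limsup_k X_k$ a.s. Call this common value $X_\infty$.

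For the integrability step, Fatou's lemma applied to $|X_k|$ yields
\[
\mathbb{E}[|X_\infty|] \;=\; \mathbb{E}\Bigl[\liminf_k |X_k|\Bigr] \;\leq\; \liminf_k \mathbb{E}[|X_k|] \;\leq\; \sup_k \mathbb{E}[|X_k|] \;<\; \infty.
\]
In particular $X_\infty$ is finite a.s., ruling out the possibility that the pointwise limit takes the values $\pm\infty$ on a set of positive measure, and placing $X_\infty\in L^{1}$ as required.

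The main obstacle, and the only place where the martingale structure really enters, is justifying the upcrossing inequality itself. I would establish it by defining recursively the stopping times $\tau_1<\rho_1<\tau_2<\rho_2<\cdots$, where $\tau_j$ is the first time after $\rho_{j-1}$ at which $X\leq a$ and $\rho_j$ the first time after $\tau_j$ at which $X\geq b$, and then appealing to the gambler-style bound
\[
(b-a)\,U_k([a,b]) \;\leq\; \sum_{j}\bigl(X_{\rho_j\wedge k}-X_{\tau_j\wedge k}\bigr) + (X_k-a)^{+}.
\]
Taking expectations and invoking the optional stopping theorem for bounded stopping times, which relies on the adaptedness and tower property supplied by Definition~\ref{def-mar}, the martingale terms in the telescoping sum vanish in expectation, leaving only the $(X_k-a)^{+}$ contribution. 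With this quantitative control on oscillations in hand, the remainder of the argument is purely measure-theoretic.
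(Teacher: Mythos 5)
The paper itself offers no proof of this statement: it is quoted verbatim as a known theorem from the cited reference (Resnick's text), so there is no internal argument to compare against. Your proposal is the standard upcrossing-inequality proof of Doob's a.s.\ convergence theorem — essentially the proof found in that reference — and it is sound: the hypothesis $\sup_k\mathbb{E}[|X_k|]<\infty$ enters exactly twice (to bound $\mathbb{E}[U_k([a,b])]$ uniformly in $k$, and in the Fatou step), while the martingale structure of Definition~\ref{def-mar} enters only through optional stopping at the bounded stopping times $\tau_j\wedge k$, $\rho_j\wedge k$. One small slip to fix: the pathwise gambler bound should carry the term $(X_k-a)^-$, not $(X_k-a)^+$. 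An upcrossing still in progress at time $k$ contributes $X_k-X_{\tau_j\wedge k}\geq X_k-a\geq -(X_k-a)^-$, so the correct pathwise inequality is $(b-a)\,U_k([a,b])\leq \sum_j\bigl(X_{\rho_j\wedge k}-X_{\tau_j\wedge k}\bigr)+(X_k-a)^-$. This is harmless for your argument: after taking expectations and annihilating the stopped-martingale terms you obtain $(b-a)\,\mathbb{E}[U_k([a,b])]\leq \mathbb{E}[(X_k-a)^-]\leq |a|+\mathbb{E}[|X_k|]$, which is all that the rest of the proof uses; alternatively, your stated form with $(X_k-a)^+$ is recovered by running the same transform argument on the submartingale $(X_k-a)^+$. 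With that correction the chain is complete: finitely many upcrossings of every rational interval a.s., hence $\liminf_k X_k=\limsup_k X_k$ a.s., and Fatou gives $\mathbb{E}[|X_\infty|]\leq\sup_k\mathbb{E}[|X_k|]<\infty$, so the limit is finite a.s.\ and lies in $L^1$.
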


The second indispensable result is the concept of smoothing function, which is widely adopted in the fields of scientific computing and  optimization.
\begin{definition}[\cite{BianWorst2013}]\label{def-sf}
For the objective function $f$ in \eqref{problem},
we call $\tilde{f}:\mathbb{R}^d\times(0,\bar{\mu}]\rightarrow\mathbb{R}$ with $\bar{\mu}>0$ a smoothing function of it,
if the following conditions hold:
\begin{itemize}
	\item [{\rm (i)}] for any fixed $\mu\in(0,\bar{\mu}]$, $\tilde{f}(\cdot,\mu)$ is continuously differentiable in
	$\mathbb{R}^d$, and for any fixed $x\in\mathbb{R}^d$, $\tilde{f}(x,\cdot)$ is differentiable on $(0,\bar{\mu}]$;
	\item [{\rm (ii)}] for any fixed $x\in\mathbb{R}^d$, $\lim_{z\rightarrow x,\mu\downarrow0}\tilde{f}(z,\mu)=f(x)$;
	\item [{\rm (iii)}] for any bounded set $\mathcal{X}\subseteq\mathbb{R}^d$, there exist positive constants $\kappa$ and $\nu$, and $q\in[0,1)$ such that for any $x\in\mathcal{X}$ and $\mu\in(0,\bar{\mu}]$, it holds
	\begin{equation}\label{eq-mu1}
		\left|\nabla_{\mu}\tilde{f}(x,\mu)\right|\leq\kappa\mu^{-q} \quad \mbox{and} \quad \left\|\nabla^2_{xx}\tilde{f}(x,\mu)\right\|\leq \nu\mu^{-q-1}.
	\end{equation}
\end{itemize}
\end{definition}
Definition \ref{def-sf} (ii) and (iii) imply that
\begin{equation*}\label{eq-mu3}
\left|\tilde{f}(x,\mu)-f(x)\right|\leq\kappa\mu^{-q+1},~~\forall x\in\mathcal{X}, \mu\in(0,\bar{\mu}].
\end{equation*}
\begin{remark}
In \cite{HiriartConvex1993,RockafellarVariational1998}, the authors introduced that the smoothing function of a continuous function $f:\mathbb{R}^d\rightarrow\mathbb{R}$ can be constructed by using convolution. 
Define function $\tilde{f}:\mathbb{R}^d\times\mathbb{R}_+\rightarrow\mathbb{R}$ by 
\begin{equation}\label{smoothing1}
	\tilde{f}(x,\mu):=\int_{\mathbb{R}^d}f(x-z)\psi^\mu(z)\mathrm{d}z=-\int_{\mathbb{R}^d}f(z)\psi^\mu(x-z)\mathrm{d}z,
\end{equation}
where $\psi^\mu$ is a sequence of bounded and nonnegative measurable functions with $\int_{\mathbb{R}^d}\psi^\mu(z)\mathrm{d}z=1$ such that the sets $B^\mu=\{z|\psi^\mu(z)>0\}$ form a bounded sequence converging to $\{0\}$ as $\mu$ converging to $0$. Rockafellar and Wets \cite[Example 7.19]{RockafellarVariational1998} gave a common used method of generating $\psi^\mu$. Considering a compact set $B$ of the origin and a continuous function $\psi:B\rightarrow[0,\infty]$ with $\int_{B}\psi(z)\mathrm{d}z=1$, they define
\begin{equation*}
	\psi^\mu(z):=
	\begin{cases}
		\displaystyle 
		\mu^{-d}\psi(z/\mu),~&z\in \mu B, \\
		\displaystyle 
		0,~&z\not\in \mu B.
	\end{cases}
\end{equation*}
Then, \eqref{smoothing1} can be expressed by
\begin{equation}\label{smoothing22}
	\tilde{f}(x,\mu)=-\int_{\mathbb{R}^d}f(z)\mu^{-d}\psi((x-z)/\mu)\mathrm{d}z=
	\int_{\mathbb{R}^d}f(x-\mu{z})\psi(z)\mathrm{d}z,
\end{equation}
which meets the prescription of Definition \ref{def-sf} under some  reasonable conditions.
\begin{itemize} 
	\item[\textbullet] From \eqref{smoothing1}, if $\psi^\mu$ is $r$-th continuously differentiable, then $\tilde{f}(\cdot,\mu)$ is $(r+1)$-th continuously differentiable. Hence, we can easily find that $\tilde{f}$ in \eqref{smoothing22} satisfies Definition \ref{def-sf}-(i).
	\item[\textbullet] In \cite[Theorem 9.67]{RockafellarVariational1998}, the authors showed that $\tilde{f}$ in \eqref{smoothing22} satisfies Definition \ref{def-sf}-(ii).
	\item[\textbullet] Based on \eqref{smoothing22}, when $f$ is locally Lipschtz continuous, we obtain the following estimations about $\kappa$ and $\nu$ in Definition \ref{def-sf}-(iii). On the one hand, if $\int_{\mathbb{R}^d}\|z\|\psi(z)\mathrm{d}z\leq\bar{\rho}<+\infty$, then
	$$\nabla_{\mu}\tilde{f}(x,\mu)=\int_{\mathbb{R}^d\setminus{D}_x}\nabla f(x-\mu{z})^{\mathrm{T}}z\psi(z)\mathrm{d}z,$$
	where $D_x$ is a set of measure $0$ such that $f(x-\mu z)$ is differentiable with respect to $z$ for any $z\not\in D_x$. 
	Thus, $|\nabla_{\mu}\tilde{f}(x,\mu)|\leq\kappa$, $\forall x\in\mathcal{X}$, $\mu\in(0,\bar{\mu}]$ with $\kappa=L\bar{\rho}$, where $L$ is a Lipschitz constant of $f$ over set $\mathcal{X}+\bar{\mu}B$.
	On the other hand, if $\psi$ is second order continuously differentiable and $\int_{\mathbb{R}^d}\|\nabla\psi(z)\|\mathrm{d}z\leq\bar{\nu}$, then
	$$\nabla^2_{xx}\tilde{f}(x,\mu)=-\int_{\mathbb{R}^d}f(z)\mu^{-d-2}\nabla^2\psi((x-z)/\mu)\mathrm{d}z.$$
	Thus, $\|\nabla^2_{xx}\tilde{f}(x,\mu)\|\leq\nu\mu^{-1}$ with $\nu=L\bar{\nu}$.
	To sum up, if $f$ is locally Lipschitz continuous on $\mathbb{R}^d$, we can always construct a smoothing function of it based on \eqref{smoothing22} with $q=0$. 
\end{itemize}
In what follows, we provide an example of constructing a smoothing function of plus function $t_+$ to accurately illustrate the statement above. Let $\rho: \mathbb{R}\rightarrow \mathbb{R}_+$ be a continuous density function satisfying
$\rho(s)=\rho(-s)$ and $\kappa:=\int_{-\infty}^\infty|s|\rho(s)\mathrm{d}s<\infty$.
Then 	
\begin{equation*}
	\phi(t,\mu):=\int_{-\infty}^\infty(t-\mu s)_+\rho(s)\mathrm{d}s
\end{equation*}
from $\mathbb{R}\times\mathbb{R}_+$ to $\mathbb{R}_+$ is well defined and satisfies $$\nabla_{\mu}\phi(t,\mu)=\int^{+\infty}_{t/\mu}s\rho(s)\mathrm{d}s, \,\nabla_t\phi(t,\mu)=\int_{-\infty}^{t/\mu}\rho(s)\mathrm{d}s,\, \nabla_t^2\phi(t,\mu)=\rho(t/\mu)/\mu.$$
Hence $\phi(t,\mu)$ is a smoothing function of $t_+$ defined in Definition \ref{def-sf} with $q=0$. Many nonsmooth functions in a large number of practical applications can be reformulated by using the plus function $t_+$ , such as $|t|$, $\max(t,y)$, $\min(t,y)$ and so on. On the basis of $\phi$, we can construct a class of smoothing functions for the nonsmooth functions which are composed of $t_+$ \cite{ChenSmoothing2012}. And in \cite[Lemma 1]{ChenSmoothing2012}, it has been shown that, for any $p\in(0,1]$, $\psi(t,\mu)^p=\left( \phi(t,\mu)+\phi(-t,\mu)\right)^p$ is a smoothing function of $|t|^p$ with $q=1-p$. Moreover, if $f:=|g(x)|^p$ with second order continuously differentiable function $g:\mathbb{R}^d\rightarrow \mathbb{R}$ and $p\in(0,1]$, then $\tilde{f}(x,\mu)=\psi(g(x),\mu)^p$ is a smoothing function of $f$ with $q=1-p$. We also notice that the smoothing function defined in Definition \ref{def-sf} has the additivity, that is if $\tilde{f}_1$ and $\tilde{f}_2$ are smoothing functions of $f_1$ and $f_2$, respectively, then $\alpha_1\tilde{f}_1+\alpha_2\tilde{f}_2$ is a smoothing function of $\alpha_1f_1+\alpha_2f_2$ with any $\alpha_1\geq 0$ and $\alpha_2\geq0$.  The readers are referred to \cite{BianNeural2014,ChenSmoothing2012} for more details on this topic. 
\end{remark}
\begin{example}
Consider the following funciton
$$f(x)=\sum_{i=1}^m|a_i^\mathrm{T}x-b_i|^p$$
with $0<p\leq 1$ and $a_i\in\mathbb{R}^d$, $b_i\in\mathbb{R}, i=1,\ldots,m$.
Choose
\begin{equation}\label{smoothing}
	\varphi_1(s,\mu)=
	\begin{cases}
		\displaystyle |s|, &\mbox{if}~|s|>\mu\\
		\displaystyle \frac{s^2}{2\mu}+\frac{\mu}{2}, &\mbox{if}~|s|\leq\mu\\
	\end{cases}
\end{equation}
as a smoothing function of $|s|$. Then
$$\tilde{f}(x,\mu)=\sum_{i=1}^m\varphi_1(a_i^\mathrm{T}x-b_i,\mu)^p$$
is a smoothing function of $f$ with $\kappa=2^{-p}mp$, $\nu=2^{1-p}p\left\| \sum_{i=1}^ma_ia_i^\mathrm{T}\right\|+2^{2-p}p(1-p)\sum_{i=1}^m\left\| a_i\right\|^2  $ and $q=1-p$.
\end{example}

The last key result that we need is given as follows, which is obtained from Laplace principle \cite{DemboLarge1998}.
\begin{definition}[\cite{ResnickA1999}]
Let $\left(\Omega,\mathcal{B} \right)$ be measurable space. Let $\rho_1$ and $\rho_2$ be positive bounded measures on $\left(\Omega,\mathcal{B} \right)$. We say that $\rho_1$ is absolutely continuous with respect to $\rho_2$, written $\rho_1\ll\rho_2$, if for any measurable set $A$, $\rho_2(A)=0$ implies $\rho_1(A)=0$.
\end{definition}
\begin{proposition}[\cite{PinnauA2016}]\label{Laplace}
Assume that $f:\mathbb{R}^d\rightarrow\mathbb{R}_+$ is continuous, bounded and attains the global minimum at the unique point $x^*$, then for any probability measure $\rho\in\mathcal{P}_{ac}(\mathbb{R}^d)$, it holds
\begin{equation*}\label{laplace}
	\lim\limits_{\beta\rightarrow\infty}\left(-\frac{1}{\beta}\log\left(\int_{\mathbb{R}^d}\mathrm{e}^{-\beta f(x)}\mathrm{d}\rho(x)\right)\right)=f(x^*).
\end{equation*}
\end{proposition}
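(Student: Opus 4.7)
The plan is to establish two matching bounds for
$$G(\beta) := -\frac{1}{\beta}\log\int_{\mathbb{R}^d} \mathrm{e}^{-\beta f(x)}\,\mathrm{d}\rho(x),$$
namely $\liminf_{\beta\to\infty} G(\beta) \geq f(x^*)$ and $\limsup_{\beta\to\infty} G(\beta) \leq f(x^*)$. Write $f^\star := f(x^*) = \min_{\mathbb{R}^d} f$. The lower bound is immediate from the defining property of a minimizer: since $f(x) \geq f^\star$ pointwise and $\rho$ is a probability measure,
$$\int_{\mathbb{R}^d} \mathrm{e}^{-\beta f(x)}\,\mathrm{d}\rho(x) \leq \mathrm{e}^{-\beta f^\star},$$
which upon taking $-\beta^{-1}\log(\cdot)$ yields $G(\beta) \geq f^\star$ for every $\beta>0$.

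For the reverse inequality I would use the continuity of $f$ at the unique minimizer. Given an arbitrary $\epsilon > 0$, choose $r = r(\epsilon) > 0$ so small that $f(x) \leq f^\star + \epsilon$ whenever $x \in B_r(x^*)$. Restricting the integral to this ball gives
$$\int_{\mathbb{R}^d} \mathrm{e}^{-\beta f(x)}\,\mathrm{d}\rho(x) \geq \int_{B_r(x^*)} \mathrm{e}^{-\beta f(x)}\,\mathrm{d}\rho(x) \geq \mathrm{e}^{-\beta(f^\star+\epsilon)}\,\rho(B_r(x^*)),$$
and hence
$$G(\beta) \leq f^\star + \epsilon - \frac{1}{\beta}\log\rho(B_r(x^*)).$$
Sending $\beta \to \infty$ with $r,\epsilon$ fixed makes the last term vanish, so $\limsup_{\beta\to\infty} G(\beta) \leq f^\star + \epsilon$; letting $\epsilon \downarrow 0$ concludes the argument. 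Combined with the easy lower bound this gives the claimed limit.

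The main obstacle is ensuring that $\rho(B_r(x^*)) > 0$ so that $\log\rho(B_r(x^*))$ is finite, for otherwise the last displayed inequality is vacuous. Absolute continuity of $\rho$ with respect to Lebesgue measure alone does not guarantee this (the density could vanish on a neighborhood of $x^*$). The proof therefore implicitly relies on $x^*$ lying in the support of $\rho$, or equivalently on the Lebesgue density of $\rho$ being positive on arbitrarily small balls around $x^*$; I would state this as a supplementary hypothesis and note that in its absence the correct right-hand side would be $\inf_{x \in \operatorname{supp}\rho} f(x)$ rather than $f(x^*)$. Modulo this standing assumption, the remainder of the argument is the classical Laplace-principle computation and presents no further difficulty.
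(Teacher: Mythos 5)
Your two-sided bound is the standard Laplace-principle argument and it is correct as far as it goes; note that the paper itself gives no proof of this proposition, it simply quotes it from \cite{PinnauA2016}, so there is no internal argument to compare against. Your closing observation is the substantive point: for a general $\rho\in\mathcal{P}_{ac}(\mathbb{R}^d)$ one only gets
$\lim_{\beta\to\infty}\bigl(-\tfrac{1}{\beta}\log\int\mathrm{e}^{-\beta f}\,\mathrm{d}\rho\bigr)=\inf_{x\in\operatorname{supp}\rho}f(x)$,
and this is in fact exactly the form in which the result is stated in the cited reference; the version reproduced in this paper silently strengthens it to $f(x^*)$, which requires $x^*\in\operatorname{supp}\rho$ (equivalently $\rho(B_r(x^*))>0$ for all $r>0$), since an absolutely continuous density may well vanish on a neighborhood of $x^*$. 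So your ``supplementary hypothesis'' is not an artifact of your proof but a genuine missing assumption in the statement as written; it is harmless for the way the proposition is used in Theorem~\ref{th-error}, where the initial law of $x_{in}$ is taken (e.g.\ uniform on a box containing $x^*$) so that $x^*$ lies in its support. Two minor polish items: you should record that the integral is finite and strictly positive so that $G(\beta)$ is well defined (finiteness follows from $f\geq f^\star\geq 0$, positivity from $\mathrm{e}^{-\beta f}>0$ together with $\rho(\mathbb{R}^d)=1$, or directly from your ball estimate once $\rho(B_r(x^*))>0$), and you may note that the boundedness hypothesis on $f$ is not actually needed for your argument.
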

\section{Smoothing iterative CBO algorithm}\label{sec3}
Taking advantages of the smoothing method, we put forward the iterative CBO algorithm with smoothing function, namely Smoothing Iterative CBO (SICBO) algorithm, for solving the global minimizer of problem \eqref{problem}. 
Details of the proposed algorithm are displayed in Algorithm \ref{alg-cbo}.
\begin{algorithm}
\renewcommand{\algorithmicrequire}{\textbf{Input:}}
\renewcommand{\algorithmicensure}{\textbf{Output:}}
\caption{Smoothing Iterative CBO (SICBO) algorithm}
\label{alg-cbo}
\begin{algorithmic}[0]
	\REQUIRE
	the number of particles $N$, initial points $x^{i,0}\in\mathbb{R}^d$, $i=1,\ldots,N$,
	parameters $\beta>0$, $\gamma> 0$, $\zeta\geq0$, and a positive sequence $\{\mu_k\}$ converging to $0$.\\
	Set $k=0$.
	\WHILE{a termination criterion is not met,}
	\STATE \textbf{Step 1.} Compute the weighted average point:
	\STATE 
	\begin{equation}\label{eq-weightedstate}
		\bar{x}^{\star,k}=\left( \bar{x}^{\star,k}_1,\ldots,\bar{x}^{\star,k}_d\right)^\mathrm{T} =\frac{\sum_{i=1}^N x^{i,k}\mathrm{e}^{-\beta \tilde{f}(x^{i,k},\mu_k)}}{\sum_{i=1}^N \mathrm{e}^{-\beta \tilde{f}(x^{i,k},\mu_k)}}.
	\end{equation}
	\STATE \textbf{Step 2.} Generate the i.i.d random variables $\{\eta_l^k\}_{k,l}$ that satisfy
	\STATE
	\begin{equation}\label{eta}
		\eta_l^k\sim\mathcal{N}(0,\zeta^2),\quad l=1,\ldots,d.
	\end{equation}
	\STATE \textbf{Step 3.} Update $x^{i,k+1}$ for $i=1,\ldots,N$ by
	\STATE 
	\begin{equation} \label{eq-updatestate}
		x^{i,k+1}=x^{i,k}-\gamma\left( x^{i,k}-\bar{x}^{\star,k}\right) -\sum^d_{l=1}\left( x_l^{i,k}-\bar{x}^{\star,k}_l\right) \eta_l^k e_l.
	\end{equation}
	\STATE \textbf{Step 4.} Increment $k$ by one and return to \textbf{Step 1}.
	\ENDWHILE
	\ENSURE
	$x^{i,k+1}$, $i=1,\ldots,N$.
\end{algorithmic}
\end{algorithm}
\begin{remark}\label{re-stop}
For the SICBO algorithm, there are some possible stopping criterions that we can choose as follows: 
\begin{itemize}
	\item [{\rm(i)}] $k\leq K$, where $K$ represents the given maximum number of iterations;
	\item [{\rm(ii)}] $\max_{1\leq i\leq N} \left\| x^{i,k+1}-x^{i,k}\right\| \leq \varepsilon$ with $\varepsilon>0$;
	\item [{\rm(ii)}] $\max_{1\leq i\leq N} \left\| x^{i,k+1}-x^{i,k}\right\| \leq \varepsilon_1 $  and   $\max_{1\leq i\leq N} \frac{\left| f(x^{i,k+1})-f(x^{i,k})\right| }{\left\| x^{i,k+1}-x^{i,k}\right\|} \leq \varepsilon_2 $ with $\varepsilon_1,\varepsilon_2>0$;
	\item [{\rm(iv)}] $\max_{1\leq i\leq N} \left| f(x^{i,k+1})-f(x^{i,k})\right|\leq\varepsilon$ with $\varepsilon>0$.
\end{itemize}
\end{remark}

\section{Global consensus}\label{sec4}
The proposed SICBO algorithm can be understood as a multi-agent system \cite{HaTime2024}. Global consensus means that agents achieve an agreement on their common state by using information from communication mechanisms. From the view point of opinion dynamics, the emergence of global consensus also shows that all agents share a common opinion on the proposal.
In this section, based on Definition \ref{def-con}, we aim at providing the analysis with regard to the global consensus for the SICBO algorithm under some appropriate conditions on parameters $\gamma$ and $\zeta$.

\begin{theorem}\label{th-consensus}
Suppose $\{x^{i,k}\}_{i=1}^N$ is the solution process of the SICBO algorithm, then the following results hold for $\forall i,j=1,\ldots,N$.
\begin{itemize}
	\item [{\rm(i)}] Assume that the parameter $\gamma$ in the SICBO algorithm satisfies
	\begin{equation}\label{par2}
		\left| 1-\gamma\right| <1,
	\end{equation}
	then $$\mathbb{E}\left[ \left\| x^{i,k}-x^{j,k}\right\| \right] =\mathcal{O}\left(|1-\gamma|^k \right), \mbox{as}~ k\rightarrow\infty.$$
	\item [{\rm(ii)}] Assume that the parameters $\gamma$ and $\zeta$ in the SICBO algorithm satisfy
	\begin{equation}\label{par1}
		(1-\gamma)^2+\zeta^2<1,
	\end{equation}
	then $$\mathbb{E}\left[\left\| x^{i,k}-x^{j,k}\right\|^2\right]=\mathcal{O}\left( \left( (1-\gamma)^2+\zeta^2\right)^k \right), \mbox{as}~ k\rightarrow\infty.$$
	Moreover, there exists a positive constant $\alpha$ and a positive random variable $C_1(\omega)$ such that
	\begin{equation*}
		\left| x^{i,k}_l-x^{j,k}_l\right|^2\leq C_1(\omega)\mathrm{e}^{-\alpha k},~\mbox{a.s.},~ \forall k\geq 0,~\forall l=1,\ldots,d.
	\end{equation*}
\end{itemize}
\end{theorem}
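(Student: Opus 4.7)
The cornerstone of the proof is the observation that, componentwise, the pairwise differences decouple across coordinates and evolve as products of i.i.d.\ random factors. Subtracting the update formula \eqref{eq-updatestate} for indices $i$ and $j$ cancels the weighted mean $\bar{x}^{\star,k}$ and yields the scalar recursion
\begin{equation*}
x^{i,k+1}_l - x^{j,k+1}_l = (1-\gamma - \eta_l^k)\bigl(x^{i,k}_l - x^{j,k}_l\bigr), \quad l=1,\ldots,d,\; k\ge 0,
\end{equation*}
so that iterating gives the explicit representation $x^{i,k}_l - x^{j,k}_l = (x^{i,0}_l - x^{j,0}_l)\prod_{s=0}^{k-1}(1-\gamma-\eta_l^s)$. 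Because $\{\eta_l^s\}$ are i.i.d.\ and independent of the initial data $\{x^{i,0}\}$, every moment calculation reduces to a product of identical one-step moments.

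For part (i), my plan is to take absolute values and expectations of the product representation, factor the expectation using independence, bound $\mathbb{E}[|1-\gamma-\eta_l^s|]$ in terms of $|1-\gamma|$ (using either $\mathbb{E}[|1-\gamma-\eta|]\le\sqrt{(1-\gamma)^2+\zeta^2}$ from Jensen, or a triangle-inequality bound $|1-\gamma|+\zeta\sqrt{2/\pi}$), and then sum over coordinates invoking the equivalence of $\ell_1$ and $\ell_2$ norms on $\mathbb{R}^d$ to recover $\mathbb{E}[\|x^{i,k}-x^{j,k}\|]$. For the squared-norm statement in part (ii), squaring the product representation and taking expectations gives, via i.i.d.\ independence across $s$, the closed form
\begin{equation*}
\mathbb{E}\bigl[(x^{i,k}_l - x^{j,k}_l)^2\bigr] = \bigl((1-\gamma)^2+\zeta^2\bigr)^{k}(x^{i,0}_l - x^{j,0}_l)^2,
\end{equation*}
and summing over $l$ yields the claimed $\mathcal{O}\bigl(((1-\gamma)^2+\zeta^2)^k\bigr)$ rate under the assumption $(1-\gamma)^2+\zeta^2<1$.

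The almost-sure exponential decay is the most delicate step. I would choose $\alpha>0$ small enough that $\mathrm{e}^{\alpha}\bigl((1-\gamma)^2+\zeta^2\bigr)\le 1$, which is possible precisely because $(1-\gamma)^2+\zeta^2<1$, and then define $M_k := \mathrm{e}^{\alpha k}(x^{i,k}_l - x^{j,k}_l)^2$. Conditioning on the natural filtration $\mathcal{F}_k$ and using the recursion together with $\mathbb{E}[(1-\gamma-\eta_l^k)^2\mid\mathcal{F}_k]=(1-\gamma)^2+\zeta^2$ shows that $M_k$ is a nonnegative supermartingale with uniformly bounded expectation; Doob's martingale convergence theorem (Theorem \ref{th-martingale}), applied either to $M_k$ directly or via its Doob decomposition into a martingale plus a decreasing predictable process, then yields $M_k\to M_\infty$ almost surely with $M_\infty$ finite. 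Taking $C_1(\omega):=\sup_k M_k$, which is a.s.\ finite since a convergent nonnegative sequence is bounded, gives $|x^{i,k}_l-x^{j,k}_l|^2\le C_1(\omega)\mathrm{e}^{-\alpha k}$ a.s. The main obstacle will be reconciling part (i) with the exact rate $|1-\gamma|^k$ stated in the theorem, since for $\zeta>0$ the expected one-step contraction factor $\mathbb{E}[|1-\gamma-\eta|]$ is strictly larger than $|1-\gamma|$, so the big-$\mathcal{O}$ constant must be allowed to absorb the $\zeta$-dependent excess; a secondary subtlety is that Theorem \ref{th-martingale} is stated for martingales rather than nonnegative supermartingales, so the Doob decomposition must be invoked carefully to stay within the tools provided.
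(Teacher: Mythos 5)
Your part (ii) is essentially sound, but your plan for part (i) has a genuine gap. The paper never takes absolute values in (i): it takes the expectation of the signed identity $x^{i,k}_l-x^{j,k}_l=(x^{i,0}_l-x^{j,0}_l)\prod_{n=0}^{k-1}(1-\gamma-\eta_l^n)$ and uses $\mathbb{E}[\eta_l^n]=0$ from \eqref{eta} to get $\mathbb{E}[x^{i,k}_l-x^{j,k}_l]=(1-\gamma)^k\,\mathbb{E}[x^{i,0}_l-x^{j,0}_l]$; the cancellation from the zero mean happens \emph{inside} the expectation, and this exact factor $(1-\gamma)^k$ is what produces the $\mathcal{O}(|1-\gamma|^k)$ claim under \eqref{par2}. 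Your route of first taking $|\cdot|$ and then bounding $\mathbb{E}[|1-\gamma-\eta_l^n|]$ yields the one-step factor $\mathbb{E}|1-\gamma-\eta|$, which for $\zeta>0$ is strictly larger than $|1-\gamma|$ (your Jensen bound gives the rate $((1-\gamma)^2+\zeta^2)^{k/2}$, the triangle bound gives $(|1-\gamma|+\zeta\sqrt{2/\pi})^k$). You flag this obstacle, but your proposed fix, absorbing the excess into the big-$\mathcal{O}$ constant, cannot work: the ratio $\bigl(\mathbb{E}|1-\gamma-\eta|/|1-\gamma|\bigr)^k$ grows exponentially in $k$, so no constant absorbs it. Indeed, since $\mathbb{E}|x^{i,k}_l-x^{j,k}_l|=\mathbb{E}|x^{i,0}_l-x^{j,0}_l|\,(\mathbb{E}|1-\gamma-\eta|)^k$ exactly, the ``expectation of the norm'' decays at this slower rate whenever $\zeta>0$ and the initial differences are nondegenerate; the only way to reach the stated $|1-\gamma|^k$ rate is to keep the expectation inside before taking norms, as the paper does (its conclusion is best read as a statement about $\mathbb{E}[x^{i,k}-x^{j,k}]$).

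For part (ii), your $L^2$ computation coincides with the paper's, except that the initial factor should be $\mathbb{E}\bigl[|x^{i,0}_l-x^{j,0}_l|^2\bigr]$ since the initial data are random. For the almost-sure bound you take a genuinely different route: making $M_k=\mathrm{e}^{\alpha k}|x^{i,k}_l-x^{j,k}_l|^2$ a nonnegative supermartingale with $\mathrm{e}^{\alpha}\bigl((1-\gamma)^2+\zeta^2\bigr)\le 1$ and setting $C_1(\omega)=\sup_k M_k$. This works: via the Doob decomposition the martingale part has uniformly bounded $L^1$ norm, so Theorem \ref{th-martingale} applies and the monotone part converges, provided $\mathbb{E}\bigl[|x^{i,0}_l-x^{j,0}_l|^2\bigr]<\infty$ (which the $L^2$ claim presupposes anyway, or which you can sidestep by conditioning on the initial data, since the $\eta$'s are independent of it). The paper instead argues pathwise and elementarily: it bounds $(1-\gamma-\eta_l^n)^2\le \mathrm{e}^{(1-\gamma-\eta_l^n)^2-1}$, applies the Strong Law of Large Numbers to get $\frac1k\sum_{n=0}^{k-1}\bigl(1-(1-\gamma-\eta_l^n)^2\bigr)\to 1-(1-\gamma)^2-\zeta^2>0$ a.s., fixes any $\alpha\in(0,1-(1-\gamma)^2-\zeta^2)$, and defines $C_1(\omega)$ from the finitely many early iterates. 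The paper's argument needs no integrability of the initial data; yours gives a slightly better admissible rate, $\alpha\le-\log\bigl((1-\gamma)^2+\zeta^2\bigr)$, at the cost of the moment assumption. Either way the a.s. conclusion of Theorem \ref{th-consensus}(ii) follows.
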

\begin{proof}
(i) Following from \eqref{eq-updatestate}, one has that for $i,j=1,\ldots,N$ and $l=1,\ldots,d$,
\begin{equation*}
x^{i,k+1}_l-x^{j,k+1}_l=\left( 1-\gamma-\eta_l^k\right)\left( x^{i,k}_l-x^{j,k}_l\right).  
\end{equation*}
By the above recursive relation, we get
\begin{equation}\label{eq-xk0}
x^{i,k}_l-x^{j,k}_l=\left( x^{i,0}_l-x^{j,0}_l\right)\prod_{n=0}^{k-1}\left( 1-\gamma-\eta_l^n\right). 
\end{equation}
Taking expectation on the both sides of \eqref{eq-xk0}, we have 
\begin{equation}\label{i-j}
\begin{split}
	\mathbb{E}\left[ x^{i,k}_l-x^{j,k}_l\right]&=\mathbb{E}\left[ x^{i,0}_l-x^{j,0}_l\right]\mathbb{E}\left[ \prod_{n=0}^{k-1}\left( 1-\gamma-\eta_l^n\right)\right]\\
	&=\mathbb{E}\left[ x^{i,0}_l-x^{j,0}_l\right]\prod_{n=0}^{k-1}\mathbb{E}\left[ 1-\gamma-\eta_l^n\right]\\
	&=(1-\gamma)^k\mathbb{E}\left[ x^{i,0}_l-x^{j,0}_l\right],
\end{split}
\end{equation}
where the first equality follows from the independence of $\left\lbrace \eta_l^n\right\rbrace_{n\geq0} $ and $x^{i,0}_l-x^{j,0}_l$, and the last two equalities follow from \eqref{eta}.
Considering $\left| 1-\gamma\right| <1$, then we have
$$\mathbb{E}\left[\left\|  x^{i,k}-x^{j,k}\right\| \right] =\mathcal{O}\left(|1-\gamma|^k \right),~\mbox{as} ~k\rightarrow\infty.$$
(ii) By squaring \eqref{eq-xk0}, we obtain
\begin{equation}\label{eq-xk02}
\left| x^{i,k}_l-x^{j,k}_l\right|^2=\left| x^{i,0}_l-x^{j,0}_l\right|^2\prod_{n=0}^{k-1}\left| 1-\gamma-\eta_l^n\right|^2. 
\end{equation}
One takes expectation on the both sides of \eqref{eq-xk02} to get
\begin{equation*}
\begin{split}
	\mathbb{E}\left[ \left| x^{i,k}_l-x^{j,k}_l\right|^2\right] &=\mathbb{E}\left[ \left| x^{i,0}_l-x^{j,0}_l\right|^2\right] \prod_{n=0}^{k-1}\mathbb{E}\left[ \left| 1-\gamma-\eta_l^n\right|^2\right] \\
	&=\prod_{n=0}^{k-1}\mathbb{E}\left[ (1-\gamma)^2-2(1-\gamma)\eta_l^n+(\eta_l^n)^2\right]  \mathbb{E}\left[ \left|x^{i,0}_l-x^{j,0}_l\right|^2\right]\\
	&=\left((1-\gamma)^2+\zeta^2\right)^k\mathbb{E}\left[ \left| x^{i,0}_l-x^{j,0}_l\right|^2\right] ,
\end{split}
\end{equation*}
where the first equality can be proved by the independence of $\left\lbrace \eta_l^n\right\rbrace_{n\geq0} $ and $x^{i,0}_l-x^{j,0}_l$, and the last one follows from \eqref{eta}.
By the above result, we conclude that
$$\mathbb{E}\left[ \left\| x^{i,k}-x^{j,k}\right\| ^2\right] =\mathcal{O}\left( \left( (1-\gamma)^2+\zeta^2\right)^k \right),~\mbox{as} ~k\rightarrow\infty.$$

In addition, noting \eqref{eq-xk02} and
$$\left( 1-\gamma-\eta_l^n\right)^2\leq\mathrm{e}^{\left( 1-\gamma-\eta_l^n\right)^2-1}
,$$
we obtain
\begin{equation*}
\begin{split}
	\left| x^{i,k}_l-x^{j,k}_l\right|^2&\leq\left| x^{i,0}_l-x^{j,0}_l\right|^2\prod_{n=0}^{k-1}\mathrm{e}^{\left( 1-\gamma-\eta_l^n\right)^2-1}\\
	&=\left| x^{i,0}_l-x^{j,0}_l\right|^2\exp\left( -\sum_{n=0}^{k-1}\left(1- \left( 1-\gamma-\eta_l^n\right)^2\right) \right).
\end{split}
\end{equation*}
Then following from the Strong Law of Large Numbers \cite{ResnickA1999} and \eqref{eta}, we have
\begin{equation}\label{slln}
	\begin{split}
\lim_{k\rightarrow\infty}\frac{1}{k}\sum_{n=0}^{k-1}\left(1-\left(  1-\gamma-\eta_l^n\right)^2\right)
 &=\mathbb{E}\left[1-\left( 1-\gamma-\eta_l^n\right)^2\right]\\
 &=1-(1-\gamma)^2-\zeta^2>0,~\mbox{a.s.}.
\end{split}
\end{equation}
Choose a constant $\alpha$ such that
\begin{equation*}
0<\alpha<1-(1-\gamma)^2-\zeta^2,
\end{equation*}
then there exists an integer $K>0$ such that 
\begin{equation*}\label{eq-K}
\left| x^{i,k}_l-x^{j,k}_l\right|^2\leq\left| x^{i,0}_l-x^{j,0}_l\right|^2\mathrm{e}^{-\alpha k},\quad \forall k\geq K,~\mbox{a.s.}.
\end{equation*}
Hence, there exists a random variable $C_1(\omega):=\max_{0\leq k\leq K}\left\{\mathrm{e}^{\alpha k}\left|x^{i,k}_l-x^{j,k}_l\right|^2\right\}$ satisfying
\begin{equation*}
\left| x^{i,k}_l-x^{j,k}_l\right|^2\leq C_1(\omega)\mathrm{e}^{-\alpha k},\quad \forall k\geq 0,~\mbox{a.s.}.
\end{equation*}
\end{proof}
\begin{remark}
From Theorem \ref{th-consensus}, we know that
\begin{itemize}
	\item [{\rm(i)}] if \eqref{par2} holds, $\mathbb{E}\left[ \left\| x^{i,k}-x^{j,k}\right\| \right]$ asymptotically converges to $0$ in an exponential rate;
	\item [{\rm(ii)}] if \eqref{par1} holds, then the SICBO algorithm reaches $L^2$ global consensus and almost surely global consensus asymptotically in an exponential rate.
\end{itemize}
\end{remark}
\section{Convergence analysis and error estimation}\label{sec5}
In this section, we give rigorous convergence properties and error estimates of the stochastic process $\{x^{i,k}\}_{i=1}^N$ generated by the SICBO algorithm.
\subsection{Convergence analysis}
In this subsection, we show that $x^{i,k}$ tends to a common asymptotic point $x_\infty$ as $k\rightarrow\infty$ a.s. for all $i=1,\ldots,N$. Firstly, we denote $\bar{x}^{k}$ as the average point at iterate $k$ with the following definition:
\begin{equation*}
	\bar{x}^{k}=\left( \bar{x}^{k}_1,\ldots,\bar{x}^{k}_d\right)^\mathrm{T}:=\frac{1}{N}\sum_{i=1}^{N}x^{i,k}.
\end{equation*}
The first result gives an estimation between $x^{i,k}$ and $\bar{x}^{\star,k}$.
\begin{lemma}\label{lem-e}
Suppose that $\{x^{i,k}\}_{i=1}^N$ be a solution process of the SICBO algorithm, define $\mathcal{D}_l:=\max_{1\leq i\leq N}\left| x^{i,0}_l-\bar{x}^{0}_l\right|^2$, then the following inequality holds:
\begin{equation}\label{eq-existar}
	\frac{1}{N}\sum_{i=1}^N\mathbb{E}\left[ \left\| x^{i,k}-\bar{x}^{\star,k}\right\|^2\right] \leq2\left( (1-\gamma)^2+\zeta^2\right)^k\sum_{l=1}^d \mathbb{E}\left[ \mathcal{D}_l \right].
\end{equation}
\end{lemma}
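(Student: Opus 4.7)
The plan is to bound the pathwise quantity $\frac{1}{N}\sum_i \|x^{i,k}-\bar{x}^{\star,k}\|^2$ by something that only involves the deviation of each particle from the \emph{arithmetic} mean $\bar{x}^k:=\frac{1}{N}\sum_i x^{i,k}$, because the arithmetic mean obeys the same linear recursion as the pairwise differences already exploited in Theorem \ref{th-consensus}(ii). First I would observe that $\bar{x}^{\star,k}=\sum_j w_j^k x^{j,k}$ with probability weights $w_j^k:=\mathrm{e}^{-\beta\tilde f(x^{j,k},\mu_k)}/\sum_m\mathrm{e}^{-\beta\tilde f(x^{m,k},\mu_k)}$, so by Jensen's inequality applied to the convex map $\|\cdot\|^2$,
$$\|x^{i,k}-\bar{x}^{\star,k}\|^2=\Big\|\sum_j w_j^k(x^{i,k}-x^{j,k})\Big\|^2\le \sum_j w_j^k\|x^{i,k}-x^{j,k}\|^2.$$

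Next I would invoke the variance-type identity $\sum_i\|x^{i,k}-x^{j,k}\|^2=\sum_i\|x^{i,k}-\bar{x}^k\|^2+N\|x^{j,k}-\bar{x}^k\|^2$ (which follows from $\sum_i(x^{i,k}-\bar{x}^k)=0$), divide by $N$, average against the weights $w_j^k$, and use $\sum_j w_j^k=1$ together with $\sum_j w_j^k a_j\le\max_j a_j$ to obtain the deterministic bound
$$\frac{1}{N}\sum_i\|x^{i,k}-\bar{x}^{\star,k}\|^2\le \frac{1}{N}\sum_i\|x^{i,k}-\bar{x}^k\|^2+\max_j\|x^{j,k}-\bar{x}^k\|^2.$$
The point of this step is that the random weights $w_j^k$ (which are correlated with every $x^{j,k}$ through the noise history and so cannot be separated under an expectation) are eliminated \emph{before} taking expectations.

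The core computation is then to control $x^{i,k}_l-\bar{x}^k_l$ pathwise. Averaging the update \eqref{eq-updatestate} over $i$ yields $\bar{x}^{k+1}_l=\bar{x}^k_l-\gamma(\bar{x}^k_l-\bar{x}^{\star,k}_l)-(\bar{x}^k_l-\bar{x}^{\star,k}_l)\eta_l^k$, so subtraction gives exactly the same recursion that drove the proof of Theorem \ref{th-consensus}(ii):
$$x^{i,k+1}_l-\bar{x}^{k+1}_l=(1-\gamma-\eta_l^k)(x^{i,k}_l-\bar{x}^k_l).$$
Iterating and using $|x^{i,0}_l-\bar{x}^0_l|^2\le\mathcal D_l$ uniformly in $i$, I would get
$$|x^{i,k}_l-\bar{x}^k_l|^2\le \mathcal D_l\prod_{n=0}^{k-1}(1-\gamma-\eta_l^n)^2,\qquad i=1,\dots,N.$$
Summing over $l$ shows that \emph{both} $\frac{1}{N}\sum_i\|x^{i,k}-\bar{x}^k\|^2$ and $\max_j\|x^{j,k}-\bar{x}^k\|^2$ are dominated pathwise by $\sum_l\mathcal D_l\prod_{n}(1-\gamma-\eta_l^n)^2$; this is the mechanism producing the sharp constant $2$ in the statement.

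Finally I would take expectations. Because $\mathcal D_l$ is a function of the initial data alone and is therefore independent of $\{\eta_l^n\}$, and because the $\eta_l^n$ are i.i.d.\ with $\mathbb E[\eta_l^n]=0$, $\mathbb E[(\eta_l^n)^2]=\zeta^2$ by \eqref{eta}, one has $\mathbb E\bigl[\prod_{n=0}^{k-1}(1-\gamma-\eta_l^n)^2\bigr]=((1-\gamma)^2+\zeta^2)^k$, and the desired inequality \eqref{eq-existar} follows. The only subtlety I anticipate is precisely the point highlighted above, namely keeping the weights $w_j^k$ under a pathwise $\max$ rather than inside a separate expectation, since any attempt to factor $\mathbb E[w_j^k\,\|x^{i,k}-x^{j,k}\|^2]$ would couple the Gibbs weights with the noise history and spoil the bound.
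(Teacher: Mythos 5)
Your proposal is correct and follows essentially the same route as the paper: the decisive ingredients --- the recursion $x^{i,k+1}_l-\bar{x}^{k+1}_l=(1-\gamma-\eta_l^k)(x^{i,k}_l-\bar{x}^{k}_l)$, the pathwise bound $2\sum_{l}\mathcal{D}_l\prod_{n=0}^{k-1}(1-\gamma-\eta_l^n)^2$, and the final factorization of the expectation via independence of the initial data from the noise --- are exactly those of the paper's proof. The only (cosmetic) difference is how the Gibbs weights are eliminated: you use Jensen's inequality plus the variance identity and $\sum_j w_j^k a_j\le\max_j a_j$, whereas the paper expands $\frac{1}{N}\sum_i\|x^{i,k}-\bar{x}^{k}+\bar{x}^{k}-\bar{x}^{\star,k}\|^2$ exactly (the cross terms vanish since $\sum_i(x^{i,k}-\bar{x}^{k})=0$) and then bounds $\|\bar{x}^{k}-\bar{x}^{\star,k}\|\le\max_i\|\bar{x}^{k}-x^{i,k}\|$ using that $\bar{x}^{\star,k}$ lies in the convex hull of the particles; both yield the same constant $2$.
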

\begin{proof}
Following from \eqref{eq-weightedstate}, we first note that
\begin{equation}\label{eq-max}
\left\| \bar{x}^{k}-\bar{x}^{\star,k}\right\|^2\leq\max_{1\leq i\leq N}\left\| \bar{x}^{k}-x^{i,k}\right\| ^2.
\end{equation}
Taking average of formula \eqref{eq-updatestate} for $i=1,\ldots,N$, we have
\begin{equation}\label{eq-barx}
\bar{x}^{k+1}-\bar{x}^{k}=-\gamma\left( \bar{x}^{k}-\bar{x}^{\star,k}\right)-\sum_{l=1}^d\left( \bar{x}^{k}_l-\bar{x}^{\star,k}_l\right)\eta_l^ke_l.  
\end{equation}
Subtracting \eqref{eq-barx} from \eqref{eq-updatestate}, we have
$$x^{i,k+1}_l-\bar{x}^{k+1}_l=\left(1-\gamma-\eta_l^k\right)\left( x^{i,k}_l-\bar{x}^{k}_l\right),\quad \forall l=1,\ldots,d.$$
Then using the above relation recursively, one gets
\begin{equation*}
x^{i,k}_l-\bar{x}^{k}_l=\left( x^{i,0}_l-\bar{x}^{0}_l\right) \prod_{n=0}^{k-1}\left( 1-\gamma-\eta_l^n\right), \quad \forall l=1,\ldots,d,
\end{equation*}
which implies that
\begin{equation}\label{eq-xibar}
\left\| x^{i,k}-\bar{x}^{k}\right\|^2=\sum_{l=1}^d\left( \left| x^{i,0}_l-\bar{x}^{0}_l\right|^2 \prod_{n=0}^{k-1}\left( 1-\gamma-\eta_l^n\right)^2\right) .
\end{equation}
Then one can obtain that
\begin{equation}\label{eq-xistar}
\begin{split}
	\frac{1}{N}\sum_{i=1}^N\left\| x^{i,k}-\bar{x}^{\star,k}\right\|^2&=\frac{1}{N}\sum_{i=1}^N\left\| x^{i,k}-\bar{x}^{k}+\bar{x}^{k}-\bar{x}^{\star,k}\right\|^2 \\
	&=\frac{1}{N}\sum_{i=1}^N\left( \left\| x^{i,k}-\bar{x}^{k}\right\|^2+\left\| \bar{x}^{k}-\bar{x}^{\star,k}\right\|^2 \right)\\
	&\leq2\max_{1\leq i\leq N}\left\| \bar{x}^{k}-x^{i,k}\right\| ^2\\
	&\leq2\sum_{l=1}^d\left( \mathcal{D}_l \prod_{n=0}^{k-1}\left( 1-\gamma-\eta_l^n\right)^2\right),
\end{split}
\end{equation}
where the second equality follows from the definition of $\bar{x}^{k}$, the first inequality follows from \eqref{eq-max}, and the last equality follows from \eqref{eq-xibar}.
Finally, using \eqref{eta} and taking expectation on the both sides of \eqref{eq-xistar}, we get
\begin{equation*}
\begin{split}
	\frac{1}{N}\sum_{i=1}^N\mathbb{E}\left[ \left\| x^{i,k}-\bar{x}^{\star,k}\right\|^2\right] &\leq 2\sum_{l=1}^d\mathbb{E}\left[ \mathcal{D}_l\right] \prod_{n=0}^{k-1}\mathbb{E}\left[\left( 1-\gamma-\eta_l^n\right)^2\right]\\
	&\leq 2\left( (1-\gamma)^2+\zeta^2\right)^k\sum_{l=1}^d \mathbb{E}\left[ \mathcal{D}_l \right].
\end{split}
\end{equation*}
\end{proof}
\begin{remark}
Lemma \ref{lem-e} along with the fact that $1+t\leq\mathrm{e}^t,~\forall t\in\mathbb{R}$ leads to
\begin{equation}\label{eq-ex}
	\frac{1}{N}\sum_{i=1}^N\mathbb{E}\left[ \left\| x^{i,k}-\bar{x}^{\star,k}\right\|^2\right]\leq 2\mathrm{e}^{-(2\gamma- \gamma^2-\zeta^2)k}\sum_{l=1}^d \mathbb{E}\left[ \mathcal{D}_l \right] .
\end{equation}
\end{remark}

The following lemma plays a vital role in analyzing the convergence of the proposed method.
\begin{lemma}\label{lem-martingale}
For any given $l\in\{1,\ldots,d\}$, denote $\mathcal{F}_k=\sigma\left(\eta_l^k,\ldots,\eta_l^0\right)$, then $\left\{\left( \sum_{n=0}^{k}\left( x^{i,n}_l-\bar{x}^{\star,n}_l\right)\eta_l^n,\mathcal{F}_k\right)\right\}_{k\geq0}$ is a martingale.
\end{lemma}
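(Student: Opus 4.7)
The plan is to verify the three defining properties of a martingale from Definition \ref{def-mar} applied to the process $S_k := \sum_{n=0}^{k}(x^{i,n}_l - \bar{x}^{\star,n}_l)\eta_l^n$ and the filtration $\{\mathcal{F}_k\}_{k\geq 0}$.

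First, the nestedness $\mathcal{F}_{k-1} \subset \mathcal{F}_k$ is immediate from $\mathcal{F}_k = \sigma(\eta_l^0, \ldots, \eta_l^k)$. For adaptedness, I would unroll \eqref{eq-updatestate} to observe that $x^{i,n}$ is a deterministic function of the (assumed deterministic) initial data $\{x^{j,0}\}_{j=1}^N$ and the noise variables $\{\eta_{l'}^j : l' = 1,\ldots,d,\, j = 0,\ldots,n-1\}$; the weighted mean $\bar{x}^{\star,n}$ inherits the same dependence through \eqref{eq-weightedstate}. With the filtration interpreted in its natural sense (extended to record the full noise history across all coordinates up to time $k$), each summand, and hence $S_k$, is $\mathcal{F}_k$-measurable. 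Integrability $\mathbb{E}[|S_k|] < \infty$ follows from the triangle inequality and Cauchy--Schwarz: $\mathbb{E}[|S_k|] \leq \zeta \sum_{n=0}^{k}\big(\mathbb{E}[\,|x^{i,n}_l - \bar{x}^{\star,n}_l|^2]\big)^{1/2}$, with each factor controlled by Lemma \ref{lem-e}.

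The core of the proof is the martingale identity $\mathbb{E}[S_k \mid \mathcal{F}_m] = S_m$ for $0 \leq m < k$. By linearity and the tower property, this reduces to showing $\mathbb{E}\big[(x^{i,n}_l - \bar{x}^{\star,n}_l)\eta_l^n \,\big|\, \mathcal{F}_{n-1}\big] = 0$ for each $n \geq 1$. The key observation is that $x^{i,n}_l$ and $\bar{x}^{\star,n}_l$ depend on noise only up to time $n-1$, so they are measurable with respect to the enlarged $\sigma$-field $\mathcal{G}_{n-1} := \sigma\big(\eta_{l'}^j : l' = 1,\ldots,d,\, j \leq n-1\big) \supseteq \mathcal{F}_{n-1}$, while $\eta_l^n$ is independent of $\mathcal{G}_{n-1}$ with $\mathbb{E}[\eta_l^n] = 0$ by \eqref{eta}. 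Conditioning first on $\mathcal{G}_{n-1}$, pulling out the $\mathcal{G}_{n-1}$-measurable factor, invoking $\mathbb{E}[\eta_l^n \mid \mathcal{G}_{n-1}] = 0$, and then projecting back to $\mathcal{F}_{n-1}$ via the tower property yields the desired zero.

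The main (and essentially only) delicate point I anticipate is the mismatch between the narrow filtration $\mathcal{F}_k = \sigma(\eta_l^0, \ldots, \eta_l^k)$ literally stated in the lemma and the fact that the summands depend on all coordinates' noise histories. Once this is resolved by taking $\mathcal{F}_k$ to include the full cross-coordinate history (the natural choice in a discrete stochastic iterative scheme), the martingale property is a clean consequence of the zero-mean, mutually independent structure of $\{\eta_l^n\}$, and no further ingredient beyond the two-step tower argument is needed.
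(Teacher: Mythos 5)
Your proof is correct and follows essentially the same route as the paper: check the nesting and adaptedness of the sums, then establish the one-step conditional-expectation identity by pulling out the past-measurable factor $x^{i,k+1}_l-\bar{x}^{\star,k+1}_l$ and using $\mathbb{E}\left[\eta_l^{k+1}\mid\mathcal{F}_k\right]=0$ from \eqref{eta}. Your two refinements---verifying integrability via Lemma \ref{lem-e} and enlarging the filtration to the full cross-coordinate (and initial-data) history so that $\bar{x}^{\star,n}_l$ is genuinely measurable---tighten points that the paper's proof asserts without comment.
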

\begin{proof}
By the definition of $\mathcal{F}_k$ and \eqref{eq-updatestate}, we have $\mathcal{F}_0\subset\mathcal{F}_1\subset\ldots$, and
\begin{equation}\label{measure}
\sum_{n=0}^{k}\left( x^{i,n}_l-\bar{x}^{\star,n}_l\right)\eta_l^n\in\mathcal{F}_k.
\end{equation} 
Besides, for any $k\geq0$, it holds
\begin{equation*}
\begin{split}
	&\mathbb{E}\left[ \sum_{n=0}^{k+1}\left( x^{i,n}_l-\bar{x}^{\star,n}_l\right)\eta_l^n\Bigg|\mathcal{F}_{k}\right]\\
	=&\mathbb{E}\left[ \sum_{n=0}^{k}\left( x^{i,n}_l-\bar{x}^{\star,n}_l\right)\eta_l^n\Bigg|\mathcal{F}_{k}\right]+\mathbb{E}\left[ \left( x^{i,k+1}_l-\bar{x}^{\star,k+1}_l\right)\eta_l^{k+1}\Big|\mathcal{F}_{k}\right]\\
	=&\sum_{n=0}^{k}\left( x^{i,n}_l-\bar{x}^{\star,n}_l\right)\eta_l^n+\left( x^{i,k+1}_l-\bar{x}^{\star,k+1}_l\right)\mathbb{E}\left[\eta_l^{k+1} \Big|\mathcal{F}_{k}\right]\\
	=& \sum_{n=0}^{k}\left( x^{i,n}_l-\bar{x}^{\star,n}_l\right)\eta_l^n,
\end{split}
\end{equation*}
where the second equality follows from \eqref{measure} and the fact that $x^{i,k+1}_l-\bar{x}^{\star,k+1}_l\in\mathcal{F}_k$, and the last equality follows from \eqref{eta}. Therefore, we know from Definition \ref{def-mar} that $\left\{\left( \sum_{n=0}^{k}\left( x^{i,n}_l-\bar{x}^{\star,n}_l\right)\eta_l^n,\mathcal{F}_k\right)\right\}_{k\geq0}$ is a martingale.
\end{proof}

Based on the aforementioned lemmas, the following theorem reveals the convergence of the SICBO algorithm. 
\begin{theorem}\label{th-common}
Suppose $\{x^{i,k}\}_{i=1}^N$ be the solution process of the SICBO algorithm and the parameters satisfy \eqref{par1}. Then this solution process converges to a common consensus point $x_\infty\in\mathbb{R}^d$ almost surely, i.e.
$$\lim_{k\rightarrow\infty}x^{i,k}=x_\infty,~\forall 1\leq i\leq N,~a.s..$$ 
\end{theorem}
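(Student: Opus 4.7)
The plan is to unwind the recursion \eqref{eq-updatestate} coordinate by coordinate, so that for every $i$ and $l$,
\[
x^{i,k+1}_l = x^{i,0}_l - \gamma\sum_{n=0}^{k}\bigl(x^{i,n}_l-\bar{x}^{\star,n}_l\bigr) - \sum_{n=0}^{k}\bigl(x^{i,n}_l-\bar{x}^{\star,n}_l\bigr)\eta_l^n.
\]
It then suffices to show that each of the two sums on the right converges almost surely as $k\to\infty$, since this yields a coordinate-wise almost sure limit of $x^{i,k}$, and the consensus statement $\lim_k\|x^{i,k}-x^{j,k}\|=0$ a.s.\ from Theorem \ref{th-consensus}(ii) then forces all per-particle limits to coincide with a common random vector $x_\infty$.

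For the drift sum I would observe that $\bar{x}^{\star,n}_l$ is a convex combination of $\{x^{j,n}_l\}_{j=1}^{N}$, so $|x^{i,n}_l-\bar{x}^{\star,n}_l|\leq \max_j |x^{i,n}_l-x^{j,n}_l|$. The almost sure exponential consensus bound of Theorem \ref{th-consensus}(ii) then gives $|x^{i,n}_l-\bar{x}^{\star,n}_l|\leq \sqrt{C_1(\omega)}\,\mathrm{e}^{-\alpha n/2}$ a.s., which is summable, so $\sum_{n=0}^{\infty}(x^{i,n}_l-\bar{x}^{\star,n}_l)$ converges absolutely with probability one.

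For the stochastic sum $M_k:=\sum_{n=0}^{k}(x^{i,n}_l-\bar{x}^{\star,n}_l)\eta_l^n$, I would invoke Lemma \ref{lem-martingale}, which already identifies $(M_k,\mathcal{F}_k)$ as a martingale, together with Doob's martingale convergence theorem (Theorem \ref{th-martingale}). What remains is the bound $\sup_k \mathbb{E}[|M_k|]<\infty$. Because $\eta_l^n$ is mean-zero with variance $\zeta^2$ and is independent both of $\eta_l^m$ for $m\neq n$ and of $(x^{i,n}_l-\bar{x}^{\star,n}_l)$ (which depends only on noise from strictly earlier steps), all cross terms vanish and
\[
\mathbb{E}[M_k^2] \;=\; \zeta^2\sum_{n=0}^{k}\mathbb{E}\bigl[(x^{i,n}_l-\bar{x}^{\star,n}_l)^2\bigr] \;\leq\; \zeta^2\sum_{n=0}^{k}\sum_{i=1}^N \mathbb{E}\bigl[\|x^{i,n}-\bar{x}^{\star,n}\|^2\bigr].
\]
Lemma \ref{lem-e} together with the convergent geometric series $\sum_n ((1-\gamma)^2+\zeta^2)^n$ (finite under \eqref{par1}) then bounds the right-hand side uniformly in $k$, and Jensen's inequality yields $\sup_k \mathbb{E}[|M_k|] \leq \sup_k\sqrt{\mathbb{E}[M_k^2]}<\infty$. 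Doob's theorem produces an almost sure limit for $M_k$.

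The step I expect to require the most care is the independence bookkeeping in the martingale argument: $x^{i,n}_l-\bar{x}^{\star,n}_l$ is actually built from \emph{all} the earlier components $\{\eta^{m}_{l'}\}_{m<n,\,l'\leq d}$, not only from $\{\eta^{m}_l\}_{m<n}$, so one has to justify explicitly that it is nonetheless independent of the single increment $\eta_l^n$ used at step $n$. Once this is settled, the two convergent summands combine to give the almost sure limit of each $x^{i,k}$, and Theorem \ref{th-consensus}(ii) then collapses these limits onto the common consensus point $x_\infty$ claimed in the theorem.
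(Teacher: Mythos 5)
Your proposal is correct and follows essentially the same route as the paper: unwind \eqref{eq-updatestate} into a drift sum and a stochastic sum, show the drift sum converges a.s.\ via the exponential consensus decay (the paper derives the bound $|x^{i,n}_l-\bar{x}^{\star,n}_l|\leq c_1(\omega)\mathrm{e}^{-c_2(\omega) n}$ from \eqref{eq-xistar} and the strong law, then argues summability), and handle the stochastic sum by Lemma \ref{lem-martingale}, the second-moment bound from Lemma \ref{lem-e} under \eqref{par1}, and Doob's theorem, before collapsing the per-particle limits via the a.s.\ consensus of Theorem \ref{th-consensus}. Your closing remark about the filtration is well taken --- the natural fix is to take $\mathcal{F}_k=\sigma(\{\eta_{l'}^m\}_{m\leq k,\,1\leq l'\leq d})$, under which $x^{i,k+1}_l-\bar{x}^{\star,k+1}_l$ is $\mathcal{F}_k$-measurable and $\eta_l^{k+1}$ is independent of $\mathcal{F}_k$ --- but this does not change the structure of the argument.
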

\begin{proof}
It follows from \eqref{eq-updatestate} that
\begin{equation}\label{eq-xi}
x^{i,k}_l=x^{i,0}_l-\gamma\sum_{n=0}^{k-1}\left( x^{i,n}_l-\bar{x}^{\star,n}_l\right)-\sum_{n=0}^{k-1}\left( x^{i,n}_l-\bar{x}^{\star,n}_l\right)\eta_l^n. 
\end{equation}

On the one hand, we first establish the a.s. convergence of  $\sum_{n=0}^{k-1}\left( x^{i,n}_l-\bar{x}^{\star,n}_l\right)$ in what follows. By \eqref{eq-xistar}, one has 
\begin{equation*}
\begin{split}
	\left| x^{i,n}_l-\bar{x}^{\star,n}_l\right|&\leq\sqrt{\sum_{i=1}^N\left\| x^{i,n}-\bar{x}^{\star,n}\right\|^2 }\\
	&\leq\sqrt{2N\sum_{l=1}^d\left(\mathcal{D}_l  \prod_{m=0}^{n-1}\left( 1-\gamma-\eta_l^m\right)^2\right)}\\
	&\leq\sqrt{2N\sum_{l=1}^d\left(\mathcal{D}_l \prod_{m=0}^{n-1}\exp\left(( 1-\gamma-\eta_l^m)^2-1\right)\right) }\\
	&=\sqrt{2N\sum_{l=1}^d\left(\mathcal{D}_l \exp\left( \sum_{m=0}^{n-1}\left(( 1-\gamma-\eta_l^m)^2-1\right)\right)\right) },~\forall i=1,\ldots,N,
\end{split}
\end{equation*}
where the last inequality uses the relation $1+t\leq\mathrm{e}^t,~\forall t\in\mathbb{R}$, and $\mathcal{D}_l$ is defined as in Lemma \ref{lem-e}.
Then, \eqref{slln} implies that
\begin{equation*}
\lim_{n\rightarrow\infty}\frac{1}{n}\sum_{m=0}^{n-1}\left((1-\gamma-\eta_l^m)^2-1\right)
=\gamma^2-2\gamma+\zeta^2<0,~\mbox{a.s.}.
\end{equation*}
The above results deduce that there exist positive constants $c_1:=c_1(\omega)$ and $c_2:=c_2(\omega)$, depending only on $\omega\in\Omega$, such that
\begin{equation}\label{eq-bound}
\left| x^{i,n}_l-\bar{x}^{\star,n}_l\right|\leq c_1\mathrm{e}^{-c_2n},~\forall l=1,\ldots,d,~\mbox{a.s.}.
\end{equation}
In order to prove the convergence of $\sum_{n=0}^{k-1}\left( x^{i,n}_l-\bar{x}^{\star,n}_l\right)$, we only need to prove the convergence of $A_k:=\sum_{n=0}^{k-1}\left( x^{i,n}_l-\bar{x}^{\star,n}_l-c_1\mathrm{e}^{-c_2n}\right)$. By \eqref{eq-bound}, we know that $A_k$ is non-increasing on $k$ and
$$A_k\geq-2\sum_{n=0}^{k-1}c_1\mathrm{e}^{-c_2n}\geq-\frac{2c_1}{1-\mathrm{e}^{-c_2}},$$
which means that $A_k$ is bounded from below.
As a result, $A_k$ is almost surely convergent as $k$ tends to $\infty$, so is $\sum_{n=0}^{k-1}\left( x^{i,n}_l-\bar{x}^{\star,n}_l\right)$.

On the other hand, we show the a.s. convergence of $\sum_{n=0}^{k-1}\left( x^{i,n}_l-\bar{x}^{\star,n}_l\right)\eta_l^n$. By Lemma \ref{lem-martingale}, we know that
$\left\lbrace \left( \sum_{n=0}^{k-1}\left( x^{i,n}_l-\bar{x}^{\star,n}_l\right)\eta_l^n,\mathcal{F}_{k-1}\right) \right\rbrace _{k\geq0}$ is a martingale. Then, one has 
\begin{equation*}
\begin{split}
	\mathbb{E}\left[ \sum_{n=0}^{k-1}\left( x^{i,n}_l-\bar{x}^{\star,n}_l\right)\eta_l^n\right] ^2
	&=\zeta^2\sum_{n=0}^{k-1}\mathbb{E}\left[ \left( x^{i,n}_l-\bar{x}^{\star,n}_l\right)^2\right] \\
	&\leq2N\zeta^2\left( \sum_{n=0}^{k-1}\left( (1-\gamma)^2+\zeta^2\right)^n\right) \sum_{l=1}^d \mathbb{E}\left[ \mathcal{D}_l  \right] \\
	&\leq\frac{2N\zeta^2}{2\gamma-\gamma^2-\zeta^2}\sum_{l=1}^d \mathbb{E}\left[ \mathcal{D}_l  \right] ,
\end{split}
\end{equation*}
where the first equality follows from \eqref{eta} and the independence of $\eta_l^n$ and $x^{i,n}_l-x^{j,n}_l$ for any given $ n\geq 0$ and $l=1,\ldots,d$, the first inequality follows from \eqref{eq-existar}, and the last inequality is deduced by \eqref{par1}. 
Thus, making use of Theorem \ref{th-martingale}, the limit of $\sum_{n=0}^{k-1}\left( x^{i,n}_l-\bar{x}^{\star,n}_l\right)\eta_l^n$ exists a.s..

By \eqref{eq-xi}, we have shown that $\forall i=1,\ldots,N$, there exists a random variable $x^i_\infty$ such that
$$\lim_{k\rightarrow\infty}x^{i,k}=x^i_\infty,~\forall l=1,\ldots,d,~\mbox{a.s.}.$$
Consequently, following from Theorem \ref{th-consensus} (iii), we establish the desired results.
\end{proof}

In the following, we derive some theoretical results on the convergence rate of the iterative process of the SICBO algorithm. We first give a supporting lemma, whose detailed proof can be found in \cite{KoConvergence2022}.
\begin{lemma}[\cite{KoConvergence2022}]\label{lem-Y}
Let $\{Y_k\}_{k\geq0}$ be an i.i.d. sequence of random variables with $\mathbb{E}\left[ |Y_0|\right] <\infty$. Then, for any given $\varepsilon>0$, there exists a random variable $C_2(\omega)$ satisfying
\begin{equation*}
	|Y_k|<C_2(\omega)\mathrm{e}^{\varepsilon k},\quad \forall k\geq0,~~a.s..
\end{equation*}
\end{lemma}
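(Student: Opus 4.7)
The plan is to invoke the first Borel--Cantelli lemma applied to the events $A_k = \{|Y_k| > e^{\varepsilon k}\}$, and then turn the resulting eventual bound into a uniform bound by absorbing the finitely many exceptional indices into the random constant. This is the standard route for deducing a deterministic-looking exponential envelope from a finite first-moment assumption on an i.i.d.\ sequence.

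The first step is a quantitative tail bound: since the $Y_k$ are identically distributed, Markov's inequality gives
\begin{equation*}
\mathbb{P}(|Y_k| > e^{\varepsilon k}) = \mathbb{P}(|Y_0| > e^{\varepsilon k}) \leq \mathbb{E}[|Y_0|]\, e^{-\varepsilon k}.
\end{equation*}
Summing over $k$ yields $\sum_{k\geq 0}\mathbb{P}(A_k) \leq \mathbb{E}[|Y_0|]/(1-e^{-\varepsilon}) < \infty$, so the first Borel--Cantelli lemma gives $\mathbb{P}(A_k \text{ i.o.}) = 0$. Equivalently, there is an almost surely finite random index $K(\omega)$ such that $|Y_k(\omega)| \leq e^{\varepsilon k}$ for every $k \geq K(\omega)$.

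The second step is to build $C_2(\omega)$. On the event where $K(\omega)<\infty$ (of probability one), set
\begin{equation*}
C_2(\omega) := \max\Bigl\{2,\; \max_{0\leq k \leq K(\omega)} |Y_k(\omega)|\, e^{-\varepsilon k}\cdot 2\Bigr\},
\end{equation*}
which is a finite, positive random variable since the max is over finitely many indices. For $k\leq K(\omega)$ the inequality $|Y_k|<C_2(\omega)e^{\varepsilon k}$ holds by construction, and for $k>K(\omega)$ it holds because $|Y_k|\leq e^{\varepsilon k} < C_2(\omega)e^{\varepsilon k}$ (the factor $2$ secures the strict inequality required in the statement).

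There is no real obstacle here: the only subtle point is remembering to arrange strict inequality (hence the multiplicative buffer in the definition of $C_2$) and to confirm measurability of $C_2$, which follows from the fact that $K(\omega)$ is a stopping-type random variable with respect to the natural filtration of $\{Y_k\}$ and that the maximum of measurable functions over a random but a.s.\ finite index set is measurable. The i.i.d.\ hypothesis is used only through identical distribution (for the Markov bound to be uniform in $k$); independence is not actually required for this particular statement, although it is typically present in the setting where the lemma is applied.
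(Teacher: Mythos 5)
Your proof is correct: the paper itself does not prove this lemma but defers to \cite{KoConvergence2022}, and your Markov-plus-Borel--Cantelli argument (with the random constant absorbing the finitely many exceptional indices) is precisely the standard argument used for this statement. Your remark that only identical distribution, not independence, is needed is also accurate, since the first Borel--Cantelli lemma requires no independence.
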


Next, the exponentially convergence of the iterative process by the SICBO algorithm is shown in the next theorem.
\begin{theorem}\label{th-exp}
Suppose $\{x^{i,k}\}_{i=1}^N$ be the solution process of the SICBO algorithm, then for $\forall i=1,2,\ldots,N$, the following statements hold.
\begin{itemize}
	\item [{\rm (i)}] If the parameter $\gamma$ in the SICBO algorithm satisfies \eqref{par2}, then
	\begin{equation}\label{econvergence}
		\mathbb{E}\left[\left\| x_\infty-x^{i,k}\right\|_1\right]\leq\frac{(\gamma+\zeta)(1-\gamma)^k}{\gamma}\mathbb{E}\left[\max_{1\leq j\leq N}\left\|x^{i,0}-x^{j,0} \right\|_1 \right], \quad \forall k\geq0,
	\end{equation}
	where $x_\infty$ is defined as in Theorem \ref{th-common}. Therefore, $x^{i,k}$ exponentially converges to $x_\infty$ in expectation.
	\item [{\rm (ii)}] If parameters $\gamma$ and $\zeta$ in the SICBO algorithm satisfy \eqref{par1}, then there exists a positive random variable $\tilde{C}(\omega)$ such that
	\begin{equation}\label{asconvergence}
		\left\| x_\infty-x^{i,k}\right\|_1\leq d\tilde{C}(\omega) \frac{\mathrm{e}^{-k\alpha/2}}{1-\mathrm{e}^{-\alpha/2}},~\forall k\geq 0,~\mbox{a.s.}.
	\end{equation}
	where $x_\infty$ is defined as in Theorem \ref{th-common} and $\alpha$ can be found in Theorem \ref{th-consensus}. Therefore, $x^{i,k}$ exponentially converges to $x_\infty$ a.s..
\end{itemize}
\end{theorem}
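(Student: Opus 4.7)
The plan is to exploit the almost sure convergence $x^{i,k}\to x_\infty$ from Theorem~\ref{th-common} to express the difference as a telescoping tail sum
\begin{equation*}
x_\infty - x^{i,k} \;=\; \sum_{n=k}^{\infty}\bigl(x^{i,n+1} - x^{i,n}\bigr),\qquad \text{a.s.},
\end{equation*}
and then bound each increment using the update rule~\eqref{eq-updatestate}, which coordinatewise gives $x^{i,n+1}_l - x^{i,n}_l = -(\gamma + \eta_l^n)(x^{i,n}_l - \bar{x}^{\star,n}_l)$. The central reduction is that, since $\bar{x}^{\star,n}$ is a convex combination of the particles with weights $w_j^n = \mathrm{e}^{-\beta \tilde f(x^{j,n},\mu_n)}/\sum_i \mathrm{e}^{-\beta\tilde f(x^{i,n},\mu_n)}$, we have $|x^{i,n}_l - \bar{x}^{\star,n}_l| \leq \max_j|x^{i,n}_l - x^{j,n}_l|$, which transfers all the work to the pairwise consensus rates already established in Theorem~\ref{th-consensus}.

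For part (i), I would take expectation and use the independence of $\eta_l^n$ from both $x^{i,n}$ and $\bar{x}^{\star,n}$ together with $\mathbb{E}|\eta_l^n|\leq\sqrt{\mathbb{E}|\eta_l^n|^2}=\zeta$ to obtain $\mathbb{E}\|x^{i,n+1}-x^{i,n}\|_1 \leq (\gamma+\zeta)\,\mathbb{E}\|x^{i,n}-\bar{x}^{\star,n}\|_1$. Then I would combine this with a per-step consensus decay $\mathbb{E}\|x^{i,n}-\bar{x}^{\star,n}\|_1 \leq (1-\gamma)^n\, \mathbb{E}\max_j\|x^{i,0}-x^{j,0}\|_1$, derived from the multiplicative representation $x^{i,n}_l - x^{j,n}_l = (x^{i,0}_l - x^{j,0}_l)\prod_{m=0}^{n-1}(1-\gamma-\eta_l^m)$ used in the proof of Theorem~\ref{th-consensus}(i) and the fact that the common random factor $\prod_{m<n}(1-\gamma-\eta_l^m)$ pulls out of the max while being independent of the (deterministic or initial-data-only) spread. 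Summing the resulting geometric series, $\sum_{n=k}^{\infty}(1-\gamma)^n = (1-\gamma)^k/\gamma$, immediately yields~\eqref{econvergence}.

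For part (ii) the argument is pathwise rather than in expectation. Theorem~\ref{th-consensus}(ii) supplies $|x^{i,n}_l - x^{j,n}_l|\leq C_1(\omega)^{1/2}\mathrm{e}^{-\alpha n/2}$ a.s.\ with $\alpha\in(0,1-(1-\gamma)^2-\zeta^2)$, and Lemma~\ref{lem-Y} applied to the i.i.d.\ sequence $\{\eta_l^n\}$ produces, for any $\varepsilon>0$, a random constant $C_2(\omega)$ with $|\eta_l^n|\leq C_2(\omega)\mathrm{e}^{\varepsilon n}$ a.s.\ for all $n$. Plugging both into the coordinatewise estimate $|x^{i,n+1}_l-x^{i,n}_l|\leq(\gamma+|\eta_l^n|)\max_j|x^{i,n}_l-x^{j,n}_l|$, choosing $\varepsilon$ small enough that the effective exponent is at least $\alpha/2$, summing $\sum_{n=k}^{\infty}\mathrm{e}^{-\alpha n/2} = \mathrm{e}^{-\alpha k/2}/(1-\mathrm{e}^{-\alpha/2})$, and adding over the $d$ coordinates produces~\eqref{asconvergence} with $\tilde{C}(\omega)$ assembled from $C_1(\omega)^{1/2}$, $\gamma+C_2(\omega)$, and the initial spread.

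The main obstacle is the sharp rate $(1-\gamma)^n$ for $\mathbb{E}\|x^{i,n}-\bar{x}^{\star,n}\|_1$ in part~(i): a naive bound through $\mathbb{E}\prod_{m<n}|1-\gamma-\eta_l^m|$ only yields the looser rate $(\mathbb{E}|1-\gamma-\eta|)^n$, which strictly exceeds $|1-\gamma|^n$ for nondegenerate Gaussian noise. The clean rate therefore requires keeping the signed convex-combination representation $x^{i,n}_l-\bar{x}^{\star,n}_l = \sum_j w_j^n(x^{i,n}_l-x^{j,n}_l)$ and exploiting $\mathbb{E}\eta_l^m = 0$ before passing to absolute values, rather than bounding the product pathwise. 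Part~(ii), once the two exponential bounds are combined, is essentially bookkeeping: consolidating the finite initial segment $n<K$ into the single random constant $\tilde C(\omega)$ is the standard device already used in Theorem~\ref{th-consensus}(ii).
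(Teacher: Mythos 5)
Your skeleton is the paper's own: the paper writes the partial sum $x_l^{i,M}-x_l^{i,k}=-\sum_{n=k}^{M-1}(\gamma+\eta_l^n)\left(x_l^{i,n}-\bar{x}_l^{\star,n}\right)$ and sends $M\to\infty$ via Theorem \ref{th-common} (your telescoping tail sum is the same device), it uses the convex-combination bound $|x_l^{i,n}-\bar{x}_l^{\star,n}|\le\max_j|x_l^{i,n}-x_l^{j,n}|$, independence of $\eta_l^n$ from the iterates and $\mathbb{E}|\eta_l^n|\le\zeta$ in part (i), and in part (ii) it combines an almost-sure pairwise decay with Lemma \ref{lem-Y} exactly as you describe. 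Part (ii) of your proposal is sound up to bookkeeping: with your choices (decay $\mathrm{e}^{-\alpha n/2}$ from Theorem \ref{th-consensus} and noise growth $\mathrm{e}^{\varepsilon n}$) the effective exponent is $\alpha/2-\varepsilon$, which is strictly below $\alpha/2$ no matter how small $\varepsilon$ is; the paper instead re-derives the pairwise bound at the faster rate $\alpha+\varepsilon<1-(1-\gamma)^2-\zeta^2$ and bounds $|\eta_l^n|\le C_2(\omega)\mathrm{e}^{\varepsilon n/2}$, so the product decays exactly like $\mathrm{e}^{-\alpha n/2}$ and \eqref{asconvergence} comes out with the stated $\alpha$. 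That is a cosmetic fix you should make, since $\alpha$ is free in an open interval.

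The genuine gap is the step you flag yourself in part (i) and then leave unresolved. To reach \eqref{econvergence} one needs $\mathbb{E}\left[\max_j|x_l^{i,n}-x_l^{j,n}|\right]\le(1-\gamma)^n\,\mathbb{E}\left[\max_j|x_l^{i,0}-x_l^{j,0}|\right]$, and your derivation (pull the common factor $\prod_{m<n}|1-\gamma-\eta_l^m|$ out of the max, then use independence) yields the rate $\left(\mathbb{E}|1-\gamma-\eta_l^0|\right)^n$, which is strictly larger than $(1-\gamma)^n$ whenever $\zeta>0$, and under \eqref{par2} alone need not even be a contraction. The repair you propose --- keep the signed representation $x_l^{i,n}-\bar{x}_l^{\star,n}=\sum_j w_j^n(x_l^{i,n}-x_l^{j,n})$ and use $\mathbb{E}\eta_l^m=0$ before passing to absolute values --- does not go through as stated: the weights $w_j^n$ depend on $\tilde f(x^{j,n},\mu_n)$ and hence on the entire noise history $\eta^0,\dots,\eta^{n-1}$, so the expectation of $\prod_{m<n}(1-\gamma-\eta_l^m)\sum_j w_j^n\left(x_l^{i,0}-x_l^{j,0}\right)$ does not factor, and since the target is $\mathbb{E}\|x_\infty-x^{i,k}\|_1$ you must take absolute values at some stage anyway. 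What your argument actually proves is an exponential estimate with rate $\mathbb{E}|1-\gamma-\eta_l^0|\le\sqrt{(1-\gamma)^2+\zeta^2}$ (Cauchy--Schwarz), hence only under \eqref{par1}, not the claimed \eqref{econvergence} under \eqref{par2}. For comparison, at this exact point the paper invokes \eqref{i-j}, which concerns the signed expectations $\mathbb{E}[x_l^{i,n}-x_l^{j,n}]$ rather than $\mathbb{E}\max_j|x_l^{i,n}-x_l^{j,n}|$; so you have correctly located the one step where the published argument is also thin, but your proposal neither proves the $(1-\gamma)^n$ decay nor weakens the claim, and as written \eqref{econvergence} remains unestablished.
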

\begin{proof}
(i) Using \eqref{eq-updatestate} recursively, for $\forall M>k$, one gets 
\begin{equation*}
x_l^{i,M}=x_l^{i,k}-\gamma\sum_{n=k}^{M-1}\left(x_l^{i,n}-\bar{x}_l^{\star,n}\right)-\sum_{n=k}^{M-1}\left(x_l^{i,n}-\bar{x}_l^{\star,n}\right)\eta_l^n.
\end{equation*}
Then, we have
\begin{equation}\label{M-k}
\begin{split}
	\left|x_l^{i,M}-x_l^{i,k}\right|&=\left|\sum_{n=k}^{M-1}\left(\gamma+\eta_l^n \right)\left(x_l^{i,n}-\bar{x}_l^{\star,n}\right) \right|\\
	&\leq\sum_{n=k}^{M-1}\left(\gamma+\left| \eta_l^n \right| \right)\max_{1\leq j\leq N}\left|x_l^{i,n}-x_l^{j,n}\right|.
\end{split}
\end{equation}
Taking expectation to \eqref{M-k} gives
\begin{equation*}
\begin{split}
	\mathbb{E}\left[\left|x_l^{i,M}-x_l^{i,k}\right|\right]&\leq\sum_{n=k}^{M-1}\left(\gamma+\mathbb{E}\left[\left| \eta_l^n\right| \right] \right)\mathbb{E}\left[\max_{1\leq j\leq N}\left|x_l^{i,n}-x_l^{j,n}\right|\right]\\
	&\leq(\gamma+\zeta)\sum_{n=k}^{M-1}(1-\gamma)^n\mathbb{E}\left[\max_{1\leq j\leq N}\left|x_l^{i,0}-x_l^{j,0}\right|\right]\\
	&=\frac{(\gamma+\zeta)(1-\gamma)^k\left(1-(1-\gamma)^{M-k} \right) }{\gamma}\mathbb{E}\left[\max_{1\leq j\leq N}\left|x_l^{i,0}-x_l^{j,0}\right|\right],
\end{split}
\end{equation*}
where the first inequality follows from the independence of $\eta_l^n$ and $x_l^{i,n}-x_l^{j,n}$ for any given $ n\geq 0$ and $l=1,\ldots,d$, the second inequality follows from \eqref{eta},  \eqref{i-j} and the fact that $\mathbb{E}[|X|]\leq \sqrt{\mathbb{E}[|X|^2]}$, for any random variable $X$.
Therefore, it holds 
\begin{equation}\label{norm1}
\begin{split}
&\mathbb{E}\left[\left\| x^{i,M}-x^{i,k}\right\|_1\right]\\\leq&\frac{(\gamma+\zeta)(1-\gamma)^k\left(1-(1-\gamma)^{M-k} \right)}{\gamma}\mathbb{E}\left[\max_{1\leq j\leq N}\left\|x^{i,0}-x^{j,0} \right\|_1 \right], ~\forall M>k\geq0.
\end{split}
\end{equation}
Letting $M\rightarrow\infty$ on the both sides of \eqref{norm1}, by Theorem \ref{th-common} and \eqref{par2}, the desired estimate \eqref{econvergence} is verified. In addition, we can draw the conclusion that $\{x^{i,k}\}_{i=1}^N$ exponentially converges to $x_\infty$ in expectation.

(ii) In view of the proof of Theorem \ref{th-consensus} (iii), we obtain that for $\forall i,j=1,\ldots,N$, $l=1,\ldots,d$, we can choose a small $\varepsilon>0$ such that
$$0<\alpha<\alpha+\varepsilon< 1-(1-\gamma)^2-\zeta^2,$$
then there exists a random variable $C_3(\omega):=\max_{0\leq k\leq K}\left\{\mathrm{e}^{(\alpha+\varepsilon)k}\left|x^{i,k}_l-x^{j,k}_l\right|^2\right\}$ satisfying
\begin{equation}\label{var}
\left| x^{i,k}_l-x^{j,k}_l\right|^2\leq C_3(\omega)\mathrm{e}^{-(\alpha+\varepsilon)k},\quad \forall k\geq 0,~\mbox{a.s.}.
\end{equation} 
Lemma \ref{lem-Y} yields that for $\varepsilon$ given in \eqref{var}, there is some random variable $C_2(\omega)$ such that
\begin{equation}\label{eta1}
\left|\eta_l^k\right|\leq C_2(\omega)e^{\varepsilon k/2}, \quad \forall k\geq0,~~\mbox{a.s.}.
\end{equation}
By \eqref{M-k}, it follows that for $M>k\geq 0$,
\begin{equation*}
\begin{split}
	\left|x_l^{i,M}-x_l^{i,k}\right|&\leq\sum_{n=k}^{M-1}\left(\gamma+C_2(\omega) \mathrm{e}^{\varepsilon n/2} \right) \max_{1\leq j\leq N}\left|x_l^{i,n}-x_l^{j,n} \right|\\
	&\leq\sum_{n=k}^{M-1}\left(\gamma+C_2(\omega) \mathrm{e}^{\varepsilon n/2} \right) \sqrt{C_3(\omega)}\mathrm{e}^{-n\left( \alpha+\varepsilon\right)/2}\\
	&=\sum_{n=k}^{M-1}\left( \gamma\mathrm{e}^{-n\varepsilon/2}+C_2(\omega)\right)\sqrt{C_3(\omega)}\mathrm{e}^{-n \alpha/2}\\
	&\leq \left(\gamma +C_2(\omega)\right)\sqrt{C_3(\omega)}\frac{\mathrm{e}^{-k\alpha/2} }{1-\mathrm{e}^{-\alpha/2}} ,~~\mbox{a.s.},
\end{split}
\end{equation*}
where the first inequality follows from \eqref{eta1}, the second inequality follows from \eqref{var} and the last inequality follows from $\alpha>0$.
Then, we have
\begin{equation}\label{norm1as}
\left\| x^{i,M}-x^{i,k}\right\|_1\leq d\tilde{C}(\omega) \frac{\mathrm{e}^{-k\alpha/2}}{1-\mathrm{e}^{-\alpha/2}},~\forall k\geq 0,~\mbox{a.s.} 
\end{equation}
with $\tilde{C}(\omega):=\left(\gamma +C_2(\omega)\right)\sqrt{C_3(\omega)}$.
Letting $M\rightarrow\infty$ in \eqref{norm1as} and using Theorem \ref{th-common}, we have the desired result in \eqref{asconvergence}. Therefore, $\alpha>0$ yields that $x^{i,k}$ exponentially converges to $x_\infty$ a.s..
\end{proof}
\subsection{Error estimate}
Since we have established the existence of the common consensus point of the SICBO
algorithm, we present an error analysis of the SICBO algorithm toward to the global minimum of \eqref{problem} in this subsection. Hereinafter, we will make use of the following notations and assumptions in terms of the smoothing function $\tilde{f}$, the parameters $\{\mu_k\}$ and the reference random variable $x_{in}$ in the SICBO algorithm.

By Definition \ref{def-sf}, note that for any given bounded set $\mathcal{X}\subseteq\mathbb{R}^d$, there exist positive parameters $\nu, \kappa, q$ such that $\tilde{f}$ satisfies
\begin{equation}\label{eq-L}
\sup\limits_{(x,\mu)\in\mathcal{X}\times(0,\bar{\mu}]}\left\|\nabla_{xx}^2\tilde{f}(x,\mu)\right\|\leq \nu\mu^{-q-1}
\end{equation}
and
\begin{equation}\label{eq-min}
\tilde{f}(x,\mu)\geq f(x)-\kappa\mu^{1-q}\geq f(x^*)-\kappa\bar{\mu}^{1-q}=:\tilde{f}_m, ~~\forall x\in\mathcal{X}, ~\mu\in(0,\bar{\mu}].
\end{equation}
\begin{assumption}\label{ass-par}
The parameters $\gamma$ and $\zeta$ in the SICBO algorithm satisfy 
\eqref{par1}.
\end{assumption}
\begin{assumption}\label{ass-ini}
The initial data $\{x^{i,0}\}_{i=1}^N$ is i.i.d and $x^{i,0}\sim x_{in}$, where $x_{in}$ is a reference random variable with a law which is absolutely continuous w.r.t the Lebesgue measure.
\end{assumption}
\begin{assumption}\label{ass-mu}
$\{\mu_k\}_{k\geq 0}$ is a positive, monotonically non-increasing sequence and satisfies that 
$$\lim_{k\rightarrow\infty}\mu_k=0 \quad\mbox{and}\quad \mu_0\leq\bar{\mu}.$$
Moreover, there exist constants $\theta$ and $\tau$ satisfying
\begin{equation*}\label{mu}
	\sum_{k=0}^{\infty}\mu_{k+1}^{-q}(\mu_k-\mu_{k+1})\leq \mu_0\theta \quad\mbox{and}\quad \sum_{k=0}^{\infty}\mathrm{e}^{-(2\gamma-\gamma^2-\zeta^2)k}\mu_k^{-1-q}\leq \tau.
\end{equation*}
\end{assumption}
%

Now, we are ready to provide the results concerning the error estimation between the objective function value of the consensus point and the global minimum of $f$.
\begin{theorem}\label{th-error}
Suppose Assumptions \ref{ass-par}-\ref{ass-mu} hold. If the parameters $\{\beta, \gamma, \zeta\}$, initial data $\mu_0$ and $\{x^{i,0}\}_{i=1}^N$ in the SICBO algorithm satisfy
\begin{equation}\label{condition}
\begin{split}
	&\left(\mathrm{e}^{-\beta\kappa\mu_0^{1-q}}-\epsilon\right)\mathbb{E}\left[\mathrm{e}^{-\beta f(x_{in})}\right]\\
	\geq& \mu_0\theta\beta\kappa\mathrm{e}^{-\beta\tilde{f}_m}
	+2\tau\nu\beta\sqrt{(\left(1+(1-\gamma)^2+\zeta^2\right)\left(\gamma^2+\zeta^2\right)}\mathrm{e}^{-\beta\tilde{f}_m}
	\sum_{l=1}^d\mathbb{E}\left[\mathcal{D}_l \right]
\end{split}
\end{equation}
for some $\epsilon\in\left(0,\mathrm{e}^{-\beta\kappa\mu_0^{1-q}}\right) $. Then, for the solution process $\{x^{i,k}\}_{i=1}^N$ to the SICBO algorithm, one has the following error estimate:
\begin{equation}\label{error}
	{\rm ess}\inf_{\omega\in\Omega}f(x_\infty(\omega))\leq f(x^\ast)+E(\beta),
\end{equation}
where $x_\infty$ is the consensus point defined as in Theorem \ref{th-common}, and $\lim_{\beta\rightarrow\infty}E(\beta)=0$.
\end{theorem}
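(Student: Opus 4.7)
The plan is to invoke the Laplace principle (Proposition \ref{Laplace}) after establishing a uniform lower bound on $\mathbb{E}[e^{-\beta f(x_\infty)}]$. Introduce the auxiliary quantity
\begin{equation*}
M_k := \frac{1}{N}\sum_{i=1}^N e^{-\beta\tilde{f}(x^{i,k},\mu_k)}.
\end{equation*}
By Theorem \ref{th-common} we have $x^{i,k}\to x_\infty$ almost surely and $\mu_k\to 0$, so Definition \ref{def-sf}(ii) gives $\tilde{f}(x^{i,k},\mu_k)\to f(x_\infty)$ a.s., and since $0\leq M_k\leq e^{-\beta\tilde{f}_m}$ by \eqref{eq-min}, the bounded convergence theorem yields $\mathbb{E}[M_k]\to\mathbb{E}[e^{-\beta f(x_\infty)}]$. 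The goal reduces to producing a $\beta$-dependent bound $\liminf_k\mathbb{E}[M_k]\geq\epsilon\,\mathbb{E}[e^{-\beta f(x_{in})}]$; combining this with $e^{-\beta\,\mathrm{ess\,inf}_\omega f(x_\infty(\omega))}\geq\mathbb{E}[e^{-\beta f(x_\infty)}]$ and applying Proposition \ref{Laplace} to the initial distribution produces the claim with
\begin{equation*}
E(\beta):=-\tfrac{1}{\beta}\log\epsilon+\Bigl(-\tfrac{1}{\beta}\log\mathbb{E}[e^{-\beta f(x_{in})}]-f(x^*)\Bigr),
\end{equation*}
where the Laplace tail vanishes and the $\epsilon$-contribution is absorbed by scaling $\epsilon$ appropriately with $\beta$.

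The core technical step is a one-step quasi-monotonicity for $\mathbb{E}[M_k]$. Writing $e^{-\beta\tilde{f}(x^{i,k+1},\mu_{k+1})}= e^{-\beta\tilde{f}(x^{i,k},\mu_k)}e^{-\beta\Delta_k^i}$ with $\Delta_k^i:=\tilde{f}(x^{i,k+1},\mu_{k+1})-\tilde{f}(x^{i,k},\mu_k)$ and using $e^{-\beta\Delta_k^i}\geq 1-\beta\Delta_k^i$, I would second-order Taylor-expand $\Delta_k^i$ in $x$ at $x^{i,k}$ and first-order in $\mu$. Conditioning on $\mathcal{F}_k$ as in Lemma \ref{lem-martingale} and using $\mathbb{E}[\eta_l^k\mid\mathcal{F}_k]=0$, $\mathbb{E}[(\eta_l^k)^2\mid\mathcal{F}_k]=\zeta^2$ together with the update rule \eqref{eq-updatestate} annihilates the stochastic first-order piece; the deterministic drift piece $-\gamma\nabla_x\tilde{f}(x^{i,k},\mu_k)^{\top}(x^{i,k}-\bar x^{\star,k})$ and the Hessian remainder are controlled through the bounds $\|\nabla_{xx}^2\tilde f\|\leq\nu\mu_k^{-q-1}$ from \eqref{eq-L}, $|\nabla_\mu\tilde f|\leq\kappa\mu_{k+1}^{-q}$ from Definition \ref{def-sf}(iii), the envelope $e^{-\beta\tilde f}\leq e^{-\beta\tilde f_m}$, and the squared consensus gap of Lemma \ref{lem-e}/\eqref{eq-existar}. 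Averaging over $i$ and taking full expectation, I expect an inequality of the form
\begin{equation*}
\mathbb{E}[M_{k+1}]\geq\mathbb{E}[M_k]-e^{-\beta\tilde{f}_m}\beta\Bigl[\kappa\mu_{k+1}^{-q}(\mu_k-\mu_{k+1})+C\nu\mu_k^{-q-1}e^{-(2\gamma-\gamma^2-\zeta^2)k}\sum_{l=1}^d\mathbb{E}[\mathcal{D}_l]\Bigr],
\end{equation*}
with $C=\sqrt{(1+(1-\gamma)^2+\zeta^2)(\gamma^2+\zeta^2)}$ arising from the drift/variance bookkeeping that already appears on the right-hand side of hypothesis \eqref{condition}.

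Telescoping this bound from $k=0$ to $\infty$ and using the two summability estimates in Assumption \ref{ass-mu}, namely $\sum_k\mu_{k+1}^{-q}(\mu_k-\mu_{k+1})\leq\mu_0\theta$ and $\sum_k e^{-(2\gamma-\gamma^2-\zeta^2)k}\mu_k^{-1-q}\leq\tau$, the accumulated error matches precisely the right-hand side of \eqref{condition}. For the initial step, Definition \ref{def-sf} gives $\tilde{f}(x,\mu_0)\leq f(x)+\kappa\mu_0^{1-q}$ on the relevant bounded set, so the i.i.d.\ assumption on the initial data (Assumption \ref{ass-ini}) yields $\mathbb{E}[M_0]\geq e^{-\beta\kappa\mu_0^{1-q}}\mathbb{E}[e^{-\beta f(x_{in})}]$. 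Subtracting the telescoped error from this initial lower bound and applying the structural hypothesis \eqref{condition} gives $\liminf_k\mathbb{E}[M_k]\geq\epsilon\,\mathbb{E}[e^{-\beta f(x_{in})}]$, and the conclusion then follows from the limit identification and Laplace step described in the first paragraph.

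The principal obstacle is the one-step quasi-monotonicity: the Hessian bound $\nu\mu_k^{-q-1}$ blows up as $\mu_k\downarrow 0$, so without some compensating contraction the telescoped error would diverge. The summability estimate on $e^{-(2\gamma-\gamma^2-\zeta^2)k}\mu_k^{-1-q}$ in Assumption \ref{ass-mu} is tailored exactly to trade the geometric particle contraction of Lemma \ref{lem-e} against this polynomial blow-up, and is the reason behind the quantitative form of \eqref{condition}. A secondary care point is the joint dependence of $\bar{x}^{\star,k}$ on all particles inside $M_{k+1}$; this is handled by conditioning on $\mathcal{F}_k$, under which $\bar{x}^{\star,k}$ is measurable and only the independent noise $\eta_l^k$ carries forward between steps.
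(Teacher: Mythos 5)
Your overall architecture coincides with the paper's: lower-bound the evolution of $M_k=\frac1N\sum_i\mathrm{e}^{-\beta\tilde f(x^{i,k},\mu_k)}$ via $\mathrm{e}^{-\beta\Delta}\geq 1-\beta\Delta$, split the increment into a $\mu$-part and an $x$-part, telescope against Assumption \ref{ass-mu}, use $\tilde f(x,\mu_0)\leq f(x)+\kappa\mu_0^{1-q}$ at $k=0$, pass to the limit by dominated convergence, and finish with ${\rm ess}\inf$ plus Proposition \ref{Laplace}. However, your core one-step estimate has a genuine gap. Expanding $\tilde f(x^{i,k+1},\mu_k)-\tilde f(x^{i,k},\mu_k)$ at $x^{i,k}$ and conditioning on the past only annihilates the \emph{stochastic} part of the first-order term; the deterministic piece $-\gamma\,\nabla_x\tilde f(x^{i,k},\mu_k)^{\mathrm T}(x^{i,k}-\bar x^{\star,k})$ survives, and it is first order in the consensus gap. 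The tools you list (the Hessian bound $\nu\mu_k^{-q-1}$, the envelope $\mathrm{e}^{-\beta\tilde f_m}$, and the squared gap of Lemma \ref{lem-e}) cannot control it: Definition \ref{def-sf} gives no gradient bound, and even if one manufactures one on the bounded set, the resulting term decays only like $\big((1-\gamma)^2+\zeta^2\big)^{k/2}$ in expectation, i.e.\ at half the exponential rate, so the second summability condition of Assumption \ref{ass-mu} (which pairs $\mu_k^{-1-q}$ with the \emph{full} rate $\mathrm{e}^{-(2\gamma-\gamma^2-\zeta^2)k}$) no longer tames $\mu_k^{-q-1}$, and the telescoped error would not match the right-hand side of \eqref{condition}, which contains only $\nu$, $\tau$ and $\sum_l\mathbb{E}[\mathcal D_l]$.

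The missing idea is an exact, pathwise cancellation of the entire first-order term coming from the weighted-average structure of $\bar x^{\star,k}$: by \eqref{eq-weightedstate}, $\sum_i \mathrm{e}^{-\beta\tilde f(x^{i,k},\mu_k)}\left(x^{i,k}_l-\bar x^{\star,k}_l\right)=0$ for every $l$, so, writing the increment via the mean value theorem and re-centering the gradient at $\bar x^{\star,k}$, the term $\sum_i \mathrm{e}^{-\beta\tilde f(x^{i,k},\mu_k)}\nabla_x\tilde f(\bar x^{\star,k},\mu_k)^{\mathrm T}(x^{i,k+1}-x^{i,k})$ vanishes identically (this is the paper's identity \eqref{eq-0}), for the drift and the noise alike and without any conditioning. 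What remains is the gradient difference $\nabla_x\tilde f(c\,x^{i,k+1}+(1-c)x^{i,k},\mu_k)-\nabla_x\tilde f(\bar x^{\star,k},\mu_k)$, which the Hessian bound turns into a term quadratic in $\left\|x^{i,k}-\bar x^{\star,k}\right\|$; Cauchy--Schwarz and \eqref{eq-existar} then produce exactly the factor $\sqrt{\left(1+(1-\gamma)^2+\zeta^2\right)\left(\gamma^2+\zeta^2\right)}\,\mathrm{e}^{-(2\gamma-\gamma^2-\zeta^2)k}\sum_l\mathbb{E}[\mathcal D_l]$ appearing in \eqref{condition}. With that identity inserted, the rest of your plan (telescoping, initial bound, dominated convergence, Laplace principle, and $E(\beta)=-\frac1\beta\log\mathbb{E}[\mathrm{e}^{-\beta f(x_{in})}]-f(x^*)-\frac1\beta\log\epsilon$, where no rescaling of the fixed $\epsilon$ is needed since $-\frac1\beta\log\epsilon\to0$) goes through as in the paper.
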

\begin{proof}
Note that
\begin{equation*}
\begin{split}
&\frac{1}{N}\sum_{i=1}^N\mathrm{e}^{-\beta\tilde{f}( x^{i,k+1},\mu_{k+1}) }-\frac{1}{N}\sum_{i=1}^N\mathrm{e}^{-\beta\tilde{f}( x^{i,k},\mu_{k}) }\\
=&\frac{1}{N}\sum_{i=1}^N\mathrm{e}^{-\beta\tilde{f}( x^{i,k},\mu_{k}) }\left(\mathrm{e}^{-\beta\left(\tilde{f}( x^{i,k+1},\mu_{k+1}) -\tilde{f}( x^{i,k},\mu_{k}) \right)}-1\right)\\
\geq&\frac{1}{N}\sum_{i=1}^N\mathrm{e}^{-\beta\tilde{f}( x^{i,k},\mu_{k}) }(-\beta)\left(\tilde{f}( x^{i,k+1},\mu_{k+1}) -\tilde{f}( x^{i,k},\mu_{k}) \right)\\
=&\left[-\frac{\beta}{N}\sum_{i=1}^N\mathrm{e}^{-\beta\tilde{f}( x^{i,k},\mu_{k}) }\left(\tilde{f}(x^{i,k+1},\mu_{k+1})-\tilde{f}(x^{i,k+1},\mu_{k})\right)\right]\\
&+\left[-\frac{\beta}{N}\sum_{i=1}^N\mathrm{e}^{-\beta\tilde{f}( x^{i,k},\mu_{k}) }\left(\tilde{f}(x^{i,k+1},\mu_{k})-\tilde{f}(x^{i,k},\mu_{k})\right)\right],
\end{split}
\end{equation*}
where the first inequality uses the fact $1+t\leq \mathrm{e}^t$, $\forall t\in\mathbb{R}$. Taking expectation on the both sides of the above inequality, we have
\begin{equation*}
\frac{1}{N}\sum_{i=1}^N\mathbb{E}\left[ \mathrm{e}^{-\beta\tilde{f}( x^{i,k+1},\mu_{k+1}) }\right] -\frac{1}{N}\sum_{i=1}^N\mathbb{E}\left[\mathrm{e}^{-\beta\tilde{f}( x^{i,k},\mu_{k}) }\right] =:\mathbb{E}\left[ \mathcal{I}_1\right] +\mathbb{E}\left[ \mathcal{I}_2\right] ,
\end{equation*}
where we denote
\begin{equation*}
\begin{split}
&\mathcal{I}_1=-\frac{\beta}{N}\sum_{i=1}^N\mathrm{e}^{-\beta\tilde{f}( x^{i,k},\mu_{k}) }\left(\tilde{f}(x^{i,k+1},\mu_{k+1})-\tilde{f}(x^{i,k+1},\mu_{k})\right),\\
&\mathcal{I}_2=-\frac{\beta}{N}\sum_{i=1}^N\mathrm{e}^{-\beta\tilde{f}( x^{i,k},\mu_{k}) }\left(\tilde{f}(x^{i,k+1},\mu_{k})-\tilde{f}(x^{i,k},\mu_{k})\right).
\end{split}
\end{equation*}
According to Theorem \ref{th-common} and \eqref{eq-weightedstate}, there exists a bounded set $\mathcal{X}$ such that for all $i=1,\ldots,N$,
\begin{equation}\label{X}
x^{i,k}\in\mathcal{X}\quad\mbox{and}\quad \bar{x}^{\star,k}\in\mathcal{X},~\forall k\geq 0,~{\rm a.s.}.
\end{equation}
By \eqref{eq-mu1} and \eqref{X} as well as the non-increasing property of $\left\lbrace \mu_k\right\rbrace $, we get
\begin{equation}\label{I1}
\begin{split}
\mathbb{E}\left[ \mathcal{I}_1\right] 
\geq-\frac{\beta\kappa}{N}\sum_{i=1}^N\mathrm{e}^{-\beta\tilde{f}_m}\mu_{k+1}^{-q}|\mu_{k+1}-\mu_k|
=-\beta\kappa \mathrm{e}^{-\beta\tilde{f}_m}\mu_{k+1}^{-q}(\mu_k-\mu_{k+1}).\\
\end{split}
\end{equation}
Recall the definition of $\bar{x}^{\star,k}$ in \eqref{eq-weightedstate}, and multiplying $\sum_{i=1}^N\mathrm{e}^{-\beta \tilde{f}(x^{i,k},\mu_k)}$ on the both sides of \eqref{eq-weightedstate} yields
$$\left(\sum_{i=1}^N\mathrm{e}^{-\beta \tilde{f}\left(x^{i,k},\mu_k\right)}\right)\bar{x}^{\star,k}=\sum_{i=1}^N\mathrm{e}^{-\beta \tilde{f}(x^{i,k},\mu_k)}x^{i,k}.$$
The $l$-th component of the above equation is as follows:
\begin{equation}\label{lth}
\left(\sum_{i=1}^N\mathrm{e}^{-\beta \tilde{f}(x^{i,k},\mu_k)}\right)\bar{x}^{\star,k}_l=\sum_{i=1}^N\mathrm{e}^{-\beta \tilde{f}(x^{i,k},\mu_k)}x^{i,k}_l,\quad \forall l=1,\ldots,d.
\end{equation}
Hence, we obtain
\begin{equation}\label{eq-0}
\begin{split}
&\sum_{i=1}^N\mathrm{e}^{-\beta \tilde{f}(x^{i,k},\mu_k)}\nabla_x\tilde{f}(\bar{x}^{\star,k},\mu_k)^{\mathrm{T}}\left(x^{i,k+1}-x^{i,k}\right)\\
=&\sum_{i=1}^N\mathrm{e}^{-\beta \tilde{f}(x^{i,k},\mu_k)}\nabla_x\tilde{f}(\bar{x}^{\star,k},\mu_k)^{\mathrm{T}}\left(-\sum_{l=1}^d\left( x^{i,k}_l-\bar{x}^{\star,k}_l\right) \left( \gamma+\eta_l^k\right) e_l\right)
=0,
\end{split}
\end{equation}
where the first equality follows from \eqref{eq-updatestate}, and the last equality follows from \eqref{lth}.
Next, we show the estimation of $\mathbb{E}\left[\mathcal{I}_2\right] $ as follows.
From the mean-value theorem with $c\in(0,1)$, we have
\begin{equation*}
	\begin{split}
		&\tilde{f}(x^{i,k+1},\mu_{k})-\tilde{f}(x^{i,k},\mu_{k})\\
		=&\nabla_x\tilde{f}(cx^{i,k+1}+(1-c)x^{i,k},\mu_k)^{\mathrm{T}}\left(x^{i,k+1}-x^{i,k}\right)\\
		=&\left( \nabla_x\tilde{f}(cx^{i,k+1}+(1-c)x^{i,k},\mu_k)-\nabla_x\tilde{f}(\bar{x}^{\star,k},\mu_k)\right)^{\mathrm{T}}\left(x^{i,k+1}-x^{i,k}\right)\\
		\geq&-\frac{ \nu}{\mu_k^{q+1}}\left\|cx^{i,k+1}+(1-c)x^{i,k}-\bar{x}^{\star,k}\right\|\left\|x^{i,k+1}-x^{i,k}\right\|\\
		=&-\frac{\nu}{\mu_k^{q+1}} \sqrt{\sum_{l=1}^d\left( x^{i,k}_l-\bar{x}^{\star,k}_l\right) ^2\left( 1-c\gamma-c\eta_l^k\right) ^2}\sqrt{\sum_{l=1}^d\left( x^{i,k}_l-\bar{x}^{\star,k}_l\right) ^2\left( \gamma+\eta_l^k\right) ^2},
		\end{split}
	\end{equation*}
where the second equality follows from \eqref{eq-0}, the first inequality follows from \eqref{eq-L} and \eqref{X}, and the third equality follows from \eqref{eq-updatestate}. We set
$$g(s):=\left( 1-s\gamma-s\eta_l^k \right) ^2,\quad 0\leq s\leq 1,$$
and observe that
$$\left( 1-s\gamma-s\eta_l^k\right) ^2\leq \max\{g(0),g(1)\}\leq g(0)+g(1)=1+\left(1-\gamma-\eta_l^k\right)^2.$$
Then, the above formula shows
\begin{equation}\label{ff}
	\small
	\begin{split}
&\tilde{f}(x^{i,k+1},\mu_{k})-\tilde{f}(x^{i,k},\mu_{k})\\
\geq& -\frac{ \nu}{\mu_k^{q+1}}\sqrt{\sum_{l=1}^d\left( x^{i,k}_l-\bar{x}^{\star,k}_l\right) ^2\left(1+\left( 1-\gamma-\eta_l^k\right) ^2\right)}\sqrt{\sum_{l=1}^d\left( x^{i,k}_l-\bar{x}^{\star,k}_l\right) ^2\left( \gamma+\eta_l^k\right) ^2} .
\end{split}
\end{equation}
By the Cauchy-Schwarz inequality
$\left|\mathbb{E}[XY]\right|^2\leq \mathbb{E}[X^2]\mathbb{E}[Y^2]$, we have
\begin{equation}\label{EE}
	\small
\begin{split}
&\mathbb{E}\left[\sqrt{\sum_{l=1}^d\left( x^{i,k}_l-\bar{x}^{\star,k}_l\right) ^2\left(1+\left( 1-\gamma-\eta_l^k\right) ^2\right)}\sqrt{\sum_{l=1}^d\left( x^{i,k}_l-\bar{x}^{\star,k}_l\right) ^2\left( \gamma+\eta_l^k\right) ^2}\right]\\
\leq&\sqrt{\mathbb{E}\left[\sum_{l=1}^d\left( x^{i,k}_l-\bar{x}^{\star,k}_l\right)^2
	\left(1+\left( 1-\gamma-\eta_l^k\right) ^2\right)\right]\mathbb{E}\left[\sum_{l=1}^d\left( x^{i,k}_l-\bar{x}^{\star,k}_l\right) ^2\left( \gamma+\eta_l^k\right) ^2\right]}\\
=&\sqrt{\left(\sum_{l=1}^d\mathbb{E}\left[ \left|x^{i,k}_l-\bar{x}^{\star,k}_l\right|^2\right] 
	\mathbb{E}\left[1+\left( 1-\gamma-\eta_l^k\right) ^2\right]\right)\left(\sum_{l=1}^d\mathbb{E}\left[ \left|x^{i,k}_l-\bar{x}^{\star,k}_l\right|^2\right] \mathbb{E}\left[ \left|\gamma+\eta_l^k\right|^2\right] \right)}\\
=&\sqrt{\left(\left(1+(1-\gamma)^2+\zeta^2\right)\mathbb{E}\left[ \left\|x^{i,k}-\bar{x}^{\star,k}\right\|^2\right] 
	\right)\left(\left(\gamma^2+\zeta^2\right)\mathbb{E}\left[ \left\|x^{i,k}-\bar{x}^{\star,k}\right\|^2\right] \right)}\\
\leq&2\sqrt{\left(1+(1-\gamma)^2+\zeta^2\right)\left(\gamma^2+\zeta^2\right)}\mathrm{e}^{-(2\gamma-\gamma^2-\zeta^2)k}\sum_{l=1}^d\mathbb{E}\left[\mathcal{D}_l \right],
\end{split}
\end{equation}
where the first equality follows from the fact that $x^{i,k}_l-\bar{x}^{\star,k}_l$ is independent on $\eta_l^k$ or any given $ n\geq 0$ and $l=1,\ldots,d$, the second equality follows from \eqref{eta}, and the last inequality is a direct consequence of \eqref{eq-ex}. Consequently, \eqref{eq-min}, \eqref{ff} and \eqref{EE} yield that
\begin{equation}\label{I2}
	\mathbb{E}[\mathcal{I}_2]\geq-\frac{2\nu\beta\sqrt{\left(1+(1-\gamma)^2+\zeta^2\right)\left(\gamma^2+\zeta^2\right)}\mathrm{e}^{-\beta\tilde{f}_m}\mathrm{e}^{-(2\gamma-\gamma^2-\zeta^2)k}}{\mu_k^{q+1}}\sum_{l=1}^d\mathbb{E}\left[\mathcal{D}_l \right].
\end{equation}
Then by \eqref{I1} and \eqref{I2}, we find
\begin{equation}\label{EI12}
\begin{split}
&\frac{1}{N}\sum_{i=1}^N\mathbb{E}\left[ \mathrm{e}^{-\beta\tilde{f}(x^{i,k+1},\mu_{k+1})}\right] -\frac{1}{N}\sum_{i=1}^N\mathbb{E}\left[ \mathrm{e}^{-\beta\tilde{f}(x^{i,k},\mu_{k})}\right] \\
\geq&-\beta\kappa \mathrm{e}^{-\beta\tilde{f}_m}\mu_{k+1}^{-q}(\mu_k-\mu_{k+1})\\
&-\frac{2\nu\beta\sqrt{\left(1+(1-\gamma)^2+\zeta^2\right)\left(\gamma^2+\zeta^2\right)}\mathrm{e}^{-\beta\tilde{f}_m}\mathrm{e}^{-(2\gamma-\gamma^2-\zeta^2)k}}{\mu_k^{q+1}}\sum_{l=1}^d\mathbb{E}\left[\mathcal{D}_l \right].
\end{split}
\end{equation}
From \eqref{eq-mu3}, we obtain
$$\mathbb{E}\left[ \mathrm{e}^{-\beta\tilde{f}(x^{i,0},\mu_{0})}\right] \geq\mathbb{E}\left[ \mathrm{e}^{-\beta\left(f(x^{i,0})+\kappa\mu_0^{1-q}\right)}\right] 
=\mathrm{e}^{-\beta\kappa\mu_0^{1-q}}\mathbb{E}\left[ \mathrm{e}^{-\beta f(x^{i,0})}\right] .$$
Then summing up \eqref{EI12} over $k$ and by the inequality above, one has
\begin{equation}\label{1}
	\small
\begin{split}
&\frac{1}{N}\sum_{i=1}^N\mathbb{E}\left[ \mathrm{e}^{-\beta\tilde{f}(x^{i,k},\mu_{k})}\right]\\ 
\geq& \frac{\mathrm{e}^{-\beta\kappa\mu_0^{1-q}}}{N}\sum_{i=1}^N\mathbb{E}\left[ \mathrm{e}^{-\beta f(x^{i,0})}\right]-\beta\kappa\mathrm{e}^{-\beta\tilde{f}_m}\sum_{n=0}^{k-1}\mu_{n+1}^{-q}(\mu_n-\mu_{n+1})\\
&-2\nu\beta\sqrt{(\left(1+(1-\gamma)^2+\zeta^2\right)\left(\gamma^2+\zeta^2\right)}\mathrm{e}^{-\beta\tilde{f}_m}\sum_{n=0}^{k-1}
\left(\mathrm{e}^{-(2\gamma-\gamma^2-\zeta^2)n}\mu_n^{-1-q}\right)
\sum_{l=1}^d\mathbb{E}\left[\mathcal{D}_l \right].
\end{split}
\end{equation}
Following from Definition \ref{def-sf}-(ii), Theorem \ref{th-common} and the Dominated Convergence Theorem \cite{ResnickA1999}, we have
\begin{equation}\label{limit}
\lim_{k\rightarrow\infty}\frac{1}{N}\sum_{i=1}^N\mathbb{E}\left[\mathrm{e}^{-\beta\tilde{f}(x^{i,k},\mu_k)}\right]=\mathbb{E}\left[\mathrm{e}^{-\beta f(x_\infty)}\right].
\end{equation}
Then letting $k\rightarrow\infty$ in \eqref{1}, we get
\begin{equation*}
\begin{split}
\mathbb{E}\left[ \mathrm{e}^{-\beta f(x_\infty)}\right] \geq& \mathrm{e}^{-\beta\kappa\mu_0^{1-q}}\mathbb{E}\left[ \mathrm{e}^{-\beta f(x_{in})}\right] -\mu_0\theta\beta\kappa\mathrm{e}^{-\beta\tilde{f}_m}\\
&-2\tau\nu\beta\sqrt{(\left(1+(1-\gamma)^2+\zeta^2\right)\left(\gamma^2+\zeta^2\right)}\mathrm{e}^{-\beta\tilde{f}_m}
\sum_{l=1}^d\mathbb{E}\left[\mathcal{D}_l \right]\\
\geq&\epsilon\mathbb{E}\left[ \mathrm{e}^{-\beta f(x_{in})}\right],
\end{split}
\end{equation*}
where the first inequality follows from \eqref{limit} and Assumptions \ref{ass-ini} and \ref{ass-mu}, the second inequality follows from \eqref{condition}.
Therefore, we have
\begin{equation}\label{log}
\mathrm{e}^{-\beta{\rm ess}\inf\limits_{\omega\in\Omega}f(x_\infty(\omega))}=\mathbb{E}\left[ \mathrm{e}^{-\beta{\rm ess}\inf\limits_{\omega\in\Omega}f(x_\infty(\omega))}\right] \geq\mathbb{E}\left[ \mathrm{e}^{-\beta f(x_\infty)}\right] \geq\epsilon\mathbb{E}\left[ \mathrm{e}^{-\beta f(x_{in})}\right] .
\end{equation}
Taking logarithm and dividing by $-\beta$ to the both sides of \eqref{log} to find
\begin{equation*}
\begin{split}
{\rm ess}\inf_{\omega\in\Omega}f(x_\infty(\omega))\leq& -\frac{1}{\beta}\log\left(\epsilon\mathbb{E}\left[ \mathrm{e}^{-\beta f(x_{in})}\right] \right)\\
=&-\frac{1}{\beta}\log\mathbb{E}\left[ \mathrm{e}^{-\beta f(x_{in})}\right] -\frac{1}{\beta}\log\epsilon.
\end{split}
\end{equation*}
So by Proposition \ref{Laplace}, if we define
\begin{equation*}\label{Ebetaa}
E(\beta):=-\frac{1}{\beta}\log\mathbb{E}\left[ \mathrm{e}^{-\beta f(x_{in})}\right] -f(x^*)-\frac{1}{\beta}\log\epsilon,
\end{equation*}
then
$${\rm ess}\inf_{\omega\in\Omega}f(x_\infty(\omega))\leq f(x^\ast)+E(\beta),$$
where $\lim_{\beta\rightarrow\infty}E(\beta)=0$.
\end{proof}
\begin{remark}
%

By multiplying $\mathrm{e}^{\beta\tilde{f}_m}$ on the both sides of \eqref{condition}, we have
\begin{equation}\label{condition2}
	\begin{split}
		&\left(\mathrm{e}^{-\beta\kappa\mu_0^{1-q}}-\epsilon\right)\mathbb{E}\left[\mathrm{e}^{\beta (\tilde{f}_m-f(x_{in}))}\right]\\
		\geq&\theta\mu_0\beta\kappa+2\tau\nu\beta\sqrt{(1+(1-\gamma)^2+\zeta^2)(\gamma^2+\zeta^2)}
		\sum\limits_{l=1}^d\mathbb{E}\left[\mathcal{D}_l \right].
	\end{split}
\end{equation}

If let $\beta\rightarrow0$ in \eqref{condition2}, we can see that the left side of \eqref{condition2} converges to $1-\varepsilon$, and the right side converges to $0$.
This implies that $\beta>0$ satisfying \eqref{condition} is guaranteed to exist. However, if let $\beta\rightarrow \infty$, due to the fact that $\tilde{f}_m-f(x_{in})<0$, we obtain that the left side of \eqref{condition2} is not larger than $0$ while the right side of it tends to $\infty$. Therefore, the estimate in Theorem \ref{th-error} is only an error estimate for the SICBO algorithm with a proper $\beta$, but can not guarantee a sufficient small error estimate on the objective function value to $f_{min}$.

In what follows, we give a sufficient condition to guarantee the small enough of the error $E(\beta)$.
It illustrates that if the parameters are properly selected, the objective function value at the global consensus point can be close enough to the global minimum. It is noteworthy that no previous literature about discrete CBO algorithm, to our knowledge, has investigated this thing.
\end{remark}

\begin{corollary}\label{co-err}
Under Assumptions \ref{ass-par}-\ref{ass-mu}, for any given $\delta>0$, there exist $\epsilon$, parameters $\{\mu_0, \beta, \gamma, \zeta\}$ and a suitable $x_{in}$ in Assumption \ref{ass-ini} such that $E(\beta)\leq \delta$, where $E(\beta)$ is defined in \eqref{error}.
\end{corollary}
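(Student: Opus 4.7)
The plan is to exploit the closed-form expression
$$E(\beta)=\left(-\frac{1}{\beta}\log\mathbb{E}\left[\mathrm{e}^{-\beta f(x_{in})}\right]-f(x^{*})\right)-\frac{1}{\beta}\log\epsilon$$
obtained in the proof of Theorem \ref{th-error}, and to drive each of its two contributions below $\delta/2$ by tuning the free parameters. For the $\epsilon$-term this is easy: I would take $\epsilon$ to be a fixed constant in $(0,1)$ independent of $\beta$, e.g.\ $\epsilon=1/2$, so that $-\frac{1}{\beta}\log\epsilon=(\log 2)/\beta\leq\delta/2$ for all sufficiently large $\beta$. For the Laplace term, any $x_{in}$ satisfying Assumption \ref{ass-ini} works, since Proposition \ref{Laplace} yields $-\frac{1}{\beta}\log\mathbb{E}[\mathrm{e}^{-\beta f(x_{in})}]\to f(x^{*})$ as $\beta\to\infty$, so this term is also at most $\delta/2$ once $\beta$ is large enough. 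Thus $E(\beta)\leq\delta$ will follow for every sufficiently large $\beta$, provided the constraint \eqref{condition} can be satisfied simultaneously.

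Verifying \eqref{condition} with $\epsilon$ held constant and $\beta$ taken large is the delicate step. I would schedule the remaining parameters as functions of $\beta$ as follows. Let $\mu_0=\mu_0(\beta)=\beta^{-a}$ with $a>1/(1-q)$, so that $\beta\kappa\mu_0^{1-q}\to 0$ and hence $\mathrm{e}^{-\beta\kappa\mu_0^{1-q}}-\epsilon\to 1/2$. Crucially, take $\bar{\mu}=\mu_0(\beta)$ by restricting the smoothing function to the domain $(0,\mu_0]$; then $\tilde{f}_m=f(x^{*})-\kappa\mu_0^{1-q}\to f(x^{*})$, so that $\mathrm{e}^{-\beta\tilde{f}_m}$ differs from $\mathrm{e}^{-\beta f(x^{*})}$ only by a bounded multiplicative factor. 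Pick $\gamma,\zeta$ obeying \eqref{par1} with $\zeta$ small, keeping $\sqrt{(1+(1-\gamma)^2+\zeta^2)(\gamma^2+\zeta^2)}$ bounded. Finally, concentrate the initial distribution, e.g.\ by letting $x_{in}\sim\mathcal{N}(x^{*},\sigma_\beta^2 I)$ with $\sigma_\beta=\beta^{-b}$ for $b$ sufficiently large; then $\sum_{l=1}^{d}\mathbb{E}[\mathcal{D}_l]=O(\sigma_\beta^2)=O(\beta^{-2b})$ is small enough to absorb the polynomial-in-$\beta$ factors appearing on the RHS of \eqref{condition}.

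The principal obstacle is an apparent exponential mismatch between the two sides of \eqref{condition}: the LHS behaves like $\mathrm{e}^{-\beta f(x^{*})}$, while the RHS carries a factor $\mathrm{e}^{-\beta\tilde{f}_m}=\mathrm{e}^{-\beta f(x^{*})}\mathrm{e}^{\beta\kappa\bar{\mu}^{1-q}}$ that would blow up for any fixed $\bar{\mu}$. The key observation that breaks this difficulty is to treat $\bar{\mu}$ not as an absolute constant but as a $\beta$-dependent smoothing scale $\mu_0(\beta)\to 0$, which ties $\tilde{f}_m$ tightly to $f(x^{*})$; after this identification, only polynomial-in-$\beta$ factors remain to be controlled, and they are balanced by the decay rates of $\mu_0$ and $\sigma_\beta$ above. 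The proof concludes by collecting these choices to verify \eqref{condition}, which together with the first paragraph yields $E(\beta)\leq\delta$.
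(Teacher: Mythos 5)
Your route is genuinely different from the paper's. The paper never sends $\beta\to\infty$: it fixes a carefully engineered $\epsilon$ (equation \eqref{epsilon}, depending on $\delta,\beta,\mu_0$), shows that for this $\epsilon$ a sufficiently small $\mu_0$ and then a sufficiently \emph{small} $\beta$, a suitable $x_{in}$ and small enough initial spread $\sum_l\mathbb{E}[\mathcal{D}_l]$ make \eqref{condition} hold, and then obtains $E(\beta)\le\delta$ purely algebraically, by proving $\mathcal{L}\epsilon\ge \mathrm{e}^{-\beta(\delta+f(x^*))}$ with $\mathcal{L}$ the lower bound on $\mathbb{E}[\mathrm{e}^{-\beta f(x_{in})}]$ extracted from \eqref{condition}; no Laplace asymptotics are used, and $x_{in}$ need not be concentrated near $x^*$ (for small $\beta$ essentially any admissible $x_{in}$ works, only the inter-particle spread must be small). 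You instead fix $\epsilon$ and work in the large-$\beta$ regime, which the remark after Theorem \ref{th-error} rules out \emph{for fixed parameters}; your key observation — that the smoothing threshold can be taken as $\bar\mu=\mu_0(\beta)\to0$ so that $\tilde f_m=f(x^*)-\kappa\mu_0^{1-q}$ and $\mathrm{e}^{-\beta\tilde f_m}\approx\mathrm{e}^{-\beta f(x^*)}$ — is legitimate (restricting $\tilde f$ to $(0,\mu_0]$ preserves Definition \ref{def-sf}, and the proof of Theorem \ref{th-error} only uses $\mu_k\le\mu_0$), and it does defuse the exponential mismatch, at the price of requiring an initial law concentrated where $f$ is near-minimal, i.e.\ knowledge of $x^*$ in the existence construction.

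Two steps need repair before your argument is complete. First, you cannot invoke Proposition \ref{Laplace} as stated, because your $x_{in}$ varies with $\beta$; but your own concentration gives the needed bound directly: if $\mathbb{P}\bigl(f(x_{in})-f(x^*)\le 1/\beta\bigr)\ge 1/2$, then $-\tfrac{1}{\beta}\log\mathbb{E}[\mathrm{e}^{-\beta f(x_{in})}]\le f(x^*)+\tfrac{1+\log 2}{\beta}$, which is all you need. Second, the schedule $\sigma_\beta=\beta^{-b}$ with a fixed $b$ does not suffice for general continuous, possibly non-Lipschitz $f$ (the paper's setting): if the modulus of continuity of $f$ at $x^*$ is worse than polynomial (e.g.\ logarithmic), then $\beta\bigl(f(x_{in})-f(x^*)\bigr)\to\infty$ on most of the Gaussian mass, $\mathbb{E}[\mathrm{e}^{-\beta(f(x_{in})-f(x^*))}]$ decays super-polynomially, and the left side of \eqref{condition2} is overwhelmed even after your rescaling of $\mu_0$ (the right side only needs polynomial control, via $\theta\mu_0\sim\mu_0^{1-q}$ and $\tau\sim\mu_0^{-1-q}$, so your accounting there is fine). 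The fix is to choose $\sigma_\beta$ adapted to $f$'s modulus of continuity at $x^*$ — small enough that $\mathbb{P}\bigl(f(x_{in})-f(x^*)\le 1/\beta\bigr)\ge 1/2$ and also $\sigma_\beta^2\le\beta^{-(a(1+q)+2)}$, say — rather than a fixed power of $\beta$. With these two amendments your proof goes through and yields $E(\beta)\le\delta$ for all sufficiently large $\beta$, which is a somewhat different (and in the $\beta$-asymptotic sense more transparent) conclusion than the paper's fixed-$\beta$ construction.
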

\begin{proof}
For any given $\delta>0$, let 
\begin{equation}\label{epsilon}
\epsilon= \frac{\mathrm{e}^{-2\mu_0^{1-q}\beta\kappa-\beta\delta}}{\mu_0\theta\beta\kappa+\mathrm{e}^{-\mu_0^{1-q}\beta\kappa-\beta\delta}}.
\end{equation}
Hereinafter, we will show that there exist corresponding parameters $\{\mu_0, \beta, \gamma, \zeta\}$ satisfying \eqref{condition}.

Firstly, we claim that for the given $\delta>0$ and $\beta>0$, there exists a $\mu_0>0$ satisfying
\begin{equation}\label{re3}
\delta>-\frac{1}{\beta}\log\left( 1-\mu_0\theta\beta\kappa\mathrm{e}^{\mu_0^{1-q}\beta\kappa}\right),
\end{equation}
in which if we let $\mu_0\rightarrow 0$ in \eqref{re3}, then it is clear that the right side of it converges to $0$ and the left side equals $\delta>0$. Upon rearranging the terms in \eqref{re3}, one has

\begin{equation}\label{re2}
\mathrm{e}^{-\mu_0^{1-q}\beta\kappa}(1-\mathrm{e}^{-\beta\delta})-\mu_0\theta\beta\kappa>0.
\end{equation}
Next, in view of
\begin{equation*}
\mathrm{e}^{-\mu_0^{1-q}\beta\kappa}-\epsilon-\mu_0\theta\beta\kappa=\frac{\mu_0\theta\beta\kappa\left(\mathrm{e}^{-\mu_0^{1-q}\beta\kappa}(1-\mathrm{e}^{-\beta\delta})-\mu_0\theta\beta\kappa \right) }{\mu_0\theta\beta\kappa+\mathrm{e}^{-\mu_0^{1-q}\beta\kappa-\beta\delta}}
\end{equation*}
and \eqref{re2}, then for the given $\delta>0$ and $\beta>0$, there exists a sufficiently small $\mu_0>0$ such that
\begin{equation}\label{re1}
\mathrm{e}^{-\mu_0^{1-q}\beta\kappa}-\epsilon-\mu_0\theta\beta\kappa>0.
\end{equation} 
By \eqref{re1}, it is easy to find a sufficiently small $\beta>0$ and a suitable $x_{in}$ such that
\begin{equation*}\label{re4}
\left(\mathrm{e}^{-\mu_0^{1-q}\beta\kappa}-\epsilon\right)\mathbb{E}\left[\mathrm{e}^{\beta\left(\tilde{f}_{m}- f(x_{in})\right)}\right]-\mu_0\theta\beta\kappa>0.
\end{equation*}
Therefore, there are appropriate parameters $\{\gamma, \zeta\}$ and initial data such that
\begin{equation*}
\begin{split}
	&\sqrt{(1+(1-\gamma)^2+\zeta^2)(\gamma^2+\zeta^2)}\sum\limits_{l=1}^d\mathbb{E}\left[\mathcal{D}_l \right]\\
	\leq& \frac{\left(\mathrm{e}^{-\mu_0^{1-q}\beta\kappa}-\epsilon\right)\mathbb{E}\left[\mathrm{e}^{\beta\left(\tilde{f}_{m}- f(x_{in})\right)}\right]-\mu_0\theta\beta\kappa}{2\tau\nu\beta},
\end{split}
\end{equation*}
which suggests \eqref{condition2} as well as \eqref{condition}.
To sum up, we confirm that for any $\delta>0$, there exist suitable parameters $\{\epsilon, \mu_0, \beta, \gamma, \zeta\}$ guarantee \eqref{condition}. 

For convenience in the forthcoming analysis, we denote
$$\mathcal{L}:=\frac{\left(\mu_0\theta\kappa+2\tau\nu\sqrt{(1+(1-\gamma)^2+\zeta^2)(\gamma^2+\zeta^2)}
\sum\limits_{l=1}^d\mathbb{E}\left[\mathcal{D}_l \right]\right)\beta\mathrm{e}^{-\beta\tilde{f}_{m}}}{\mathrm{e}^{-\mu_0^{1-q}\beta\kappa}-\epsilon},$$
where $\{\epsilon, \beta, \mu_0, \gamma, \zeta\}$  is the aforementioned parameters that make \eqref{condition} valid.
From Theorem \ref{th-error}, \eqref{condition} implies  \eqref{error} and
\begin{equation}\label{Ebeta}
E(\beta)\leq-\frac{1}{\beta}\log\mathcal{L}-f(x^*)-\frac{1}{\beta}\log\epsilon.
\end{equation}
Moreover, by \eqref{epsilon}, one has
$$\frac{\mu_0\theta\beta\kappa\epsilon}{\mathrm{e}^{-\mu_0^{1-q}\beta\kappa}-\epsilon}= \mathrm{e}^{-\beta(\delta+\kappa\mu_0^{1-q})}.$$
Then, we have
\begin{equation}\label{delta1}
\frac{\left(\mu_0\theta\kappa+2\tau\nu\sqrt{(1+(1-\gamma)^2+\zeta^2)(\gamma^2+\zeta^2)}\sum\limits_{l=1}^d\mathbb{E}\left[\mathcal{D}_l \right]\right)\beta\epsilon}{\mathrm{e}^{-\mu_0^{1-q}\beta\kappa}-\epsilon}\geq\mathrm{e}^{-\beta(\delta+\kappa\mu_0^{1-q})}.
\end{equation}
We multiply the both sides of \eqref{delta1} by $\mathrm{e}^{-\beta\tilde{f}_{m}}$ to obtain
\begin{equation}\label{delta2}
\mathcal{L}\epsilon\geq\mathrm{e}^{-\beta(\delta+\kappa\mu_0^{1-q}+\tilde{f}_{m})}\geq\mathrm{e}^{-\beta(\delta+f(x^*))},
\end{equation}
where the last inequality follows from the definition of $\tilde{f}_{m}$ in \eqref{eq-min} and Assumption \ref{ass-mu}. 
Taking logarithm and dividing by $-\beta$ to the both sides of \eqref{delta2}, we have
\begin{equation*}
-\frac{1}{\beta}\log\left(\mathcal{L}\epsilon\right) \leq\delta+f(x^*),
\end{equation*}
i.e.
\begin{equation}\label{delta3}
-\frac{1}{\beta}\log\mathcal{L}-f(x^*)-\frac{1}{\beta}\log\epsilon\leq\delta.
\end{equation}
Therefore, combining \eqref{Ebeta} and \eqref{delta3}, we obtain the desired result that $E(\beta)\leq \delta$.
\end{proof}

\section {Numerical experiments}\label{sec6}
In order to validate the theoretical results and investigate the numerical behaviors of the SICBO algorithm, we illustrate some experimental results in this section. All experiments are performed in PYTHON 3.8 on a Lenovo PC (2.60 GHz, 32.0 GB RAM). Prior to the numerical experiments, we give some discussion on the implementation of the SICBO algorithm.

\begin{itemize}
\item[{\rm(i)}] Unless otherwise specified, the stop criterion is set as the one mentioned in Remark \ref{re-stop} (iii) with nonnegative constants $\varepsilon_1=\varepsilon_2=1\mathrm{e}$-$10$.
\item[{\rm(ii)}]  The initial point $\{x^{{i,0}}\}_{i=1}^N$ is randomly initialized from a uniform distribution in a given set $\mathcal{S}$.
\end{itemize}
\begin{example}\label{ex-1}
In this example, we verify the global consistency of the proposed SICBO algorithm by considering the following problem
\begin{equation}\label{ex-f1}
	\min_{x\in\mathbb{R}^d} f(x)=\frac{1}{d}\sum_{l=1}^d\left[|x_l|-10\cos(2\pi x_l)+10\right],
\end{equation}
where the objective function $f$ is nonsmooth and nonconvex, and the global minimizer of it is $x^*=(0,\ldots,0)^\mathrm{T}\in\mathbb{R}^d$. It has a lot of local minimizers, whose number grows exponentially fast in term of the dimensionality. 
Using \eqref{smoothing}, we construct the smoothing function of $f$ as follows:
\begin{equation*}
	\tilde{f}(x,\mu)=\frac{1}{d}\sum_{l=1}^d\left[\varphi_1(x_l,\mu)-10\cos(2\pi x_l)+10\right],
\end{equation*}
and take the smoothing parameter by $\mu_k=\frac{1}{(1+k)^2}$. We use the SICBO algorithm to solve problem \eqref{ex-f1} with $d=4$. We set $\mathcal{S}=[-5,5]^d$ as the domain for initial data. The fixed parameters in the SICBO algorithm are given as follows:
\begin{equation*}
	N=100,\quad \beta=100, \quad\zeta=0.1,\quad\gamma=0.01.
\end{equation*} 
From Fig. \ref{fig:BB}, we see that the solution processes of the $100$ particles in the SICBO algorithm ultimately converges to a common consensus point $x_\infty$, where $x_\infty=(-0.0139, -0.0445, -0.0006, 0.0029)^\mathrm{T}$. This result supports the emergence of global consensus of the SICBO algorithm.
\begin{figure}[h]
	\centering
	\includegraphics[width=0.95\linewidth]{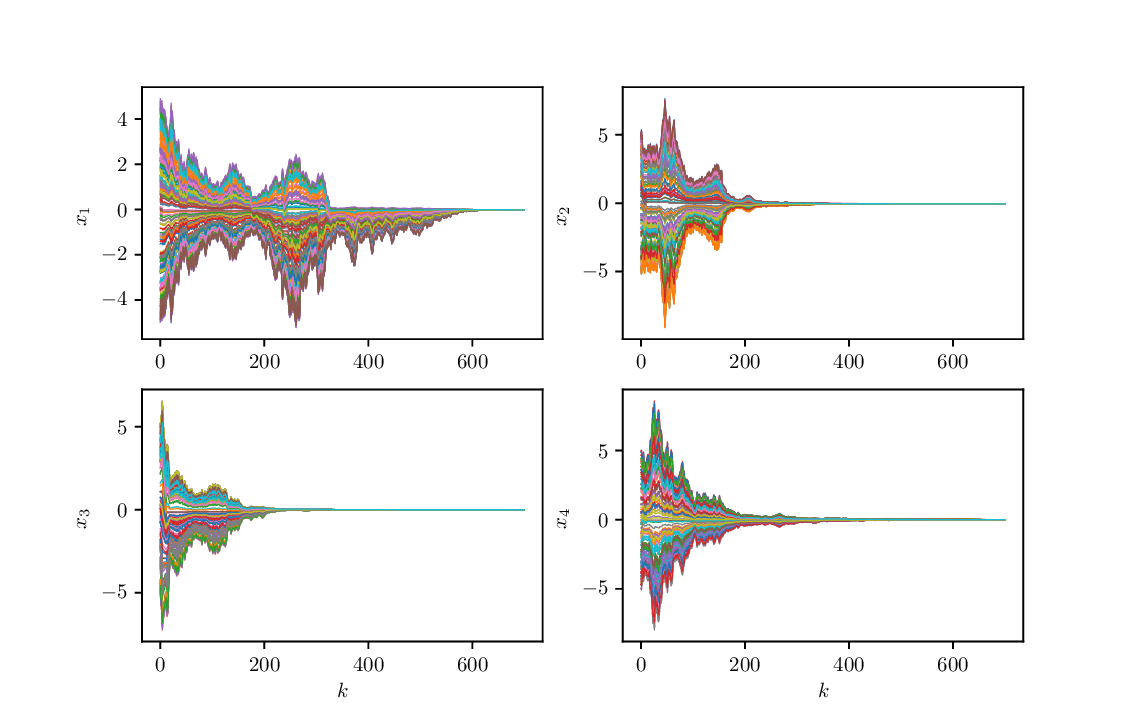}
	\caption{Trajectories of the $100$ particles for Example \ref{ex-1} with $d=4$.}
	\label{fig:BB}
\end{figure}
\end{example}
\begin{example}\label{ex-2}
We compare the numerical performance of the SICBO algorithm and the  Stochastic Subgradient Descent (SSD) method in solving the global minimizer of nonsmooth and nonconvex optimization problem, and show the great superiority of our proposed algorithm by this example. We construct the following objective function $f$:
\begin{equation}\label{ex-f2}
	\min_{x\in\mathbb{R}} f(x)=\frac{1}{n}\sum_{i=1}^n g(x,\hat{x}_i),
\end{equation}
where
\begin{equation*}
	g(x,\hat{x}_i)=\mathrm{e}^{\sin\left(2\left(x+\frac{\pi}{2}\right)^2\right)}+\frac{1}{10}\left| x-\hat{x}_i\right|,\quad \hat{x}_i\sim\mathcal{N}(0,0.1).
\end{equation*}
\begin{figure}[h]
	\centering
	\includegraphics[width=0.6\linewidth]{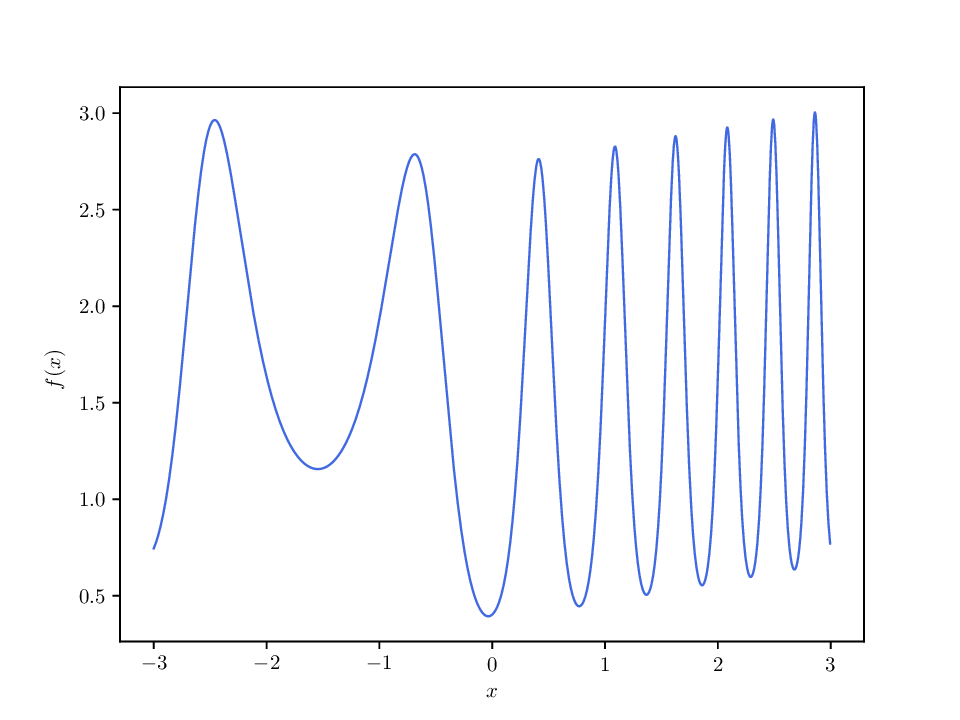}
	\caption{The objective function $f$ in \eqref{ex-f2} with $n=10^4$.}
	\label{fig:funpic}
\end{figure}
Fig. \ref{fig:funpic} shows that $f$ in \eqref{ex-f2} attains multiple local minimizers. And the global minimizer to the optimization problem \eqref{ex-f2} is $x^*\approx -0.0352$, with the global minima $f(x^*)\approx 0.3933$.

SSD algorithm updates $x_k$ in the following way,
\begin{equation*}
	x^{k+1}=x^k-\frac{s}{|b^k|}\sum_{i\in b^k}\partial_{x}g(x^k,\hat{x}_i),
\end{equation*}
where $b^k$ is an index set randomly drawn from $\{1,\ldots,n\}$, $|b^k|$ is the cardinality of set $b^k$ and $s$ is the given step length. We set $n=10^4$, $|b^k|=20$ and $s=0.01$.
In addition, for our method, the smoothing function is also constructed by using \eqref{smoothing} and set $\mu_k=\frac{1}{(1+k)^2}$. The parameters are set as below:
\begin{equation*}
	N=100,\quad \beta=50, \quad\zeta=0.1,\quad\gamma=0.01.
\end{equation*} 
For both methods, we choose the initial points randomly in the same way with $\mathcal{S}=[-3,3]$ and separately run 100 simulations. One simulation is considered as a successful test, if $\max_{1\leq i\leq N}\left\| x^{i,k}-x^*\right\| <5\mathrm{e}$-$03$ for the SICBO algorithm and $\left\| x^k-x^*\right\| <5\mathrm{e}$-$03$ for the SSD algorithm.

In Table \ref{table1}, we list the success rates (\texttt{suc-rat}), solution errors (\texttt{sol-err}) defined by $\frac{1}{N}\sum_{i=1}^N\left\| x^{i,k}-x^*\right\|$ (SICBO) or $\left\| x^k-x^*\right\|$ (SSD), and objective function value errors (\texttt{fun-err}) calculated by $\frac{1}{N}\sum_{i=1}^N \left| f(x^{i,k})-f(x^*)\right|$ (SICBO) or $\left| f(x^k)-f(x^*)\right|$ (SSD), which are averaged over 100 runs. It is easily seen that compared with the SSD method, even better results are achieved by the SICBO algorithm. The SSD method always cannot escape from the local minimum and find the global minimum in most cases, however, the SICBO method can do the both things with high probabilities.
\begin{table}[htbp]
	\caption{Numerical results of the SSD method and the SICBO algorithm.} \label{table1}
	\begin{center}
		\begin{tabular}{cccc}
			\hline
			Algorithm&\texttt{suc-rat}  &\texttt{sol-err}  &\texttt{fun-err}  \\
			\hline
			SSD&$21\%$  &$1.41$  &$0.29$  \\
			\hline
			SICBO&$\textbf{92\%}$  &$\bf{2.69}\mathrm{\bf{e}}$\bf{-}$\bf{03}$  &$\bf{1.66}\mathrm{\bf{e}}$\bf{-}$\bf{04}$  \\
			\hline
		\end{tabular}
	\end{center}
\end{table}
\end{example}	
\begin{example}\label{ex-3}
In this example, we investigate the influence of the number of particles $N$ and the exponential weight $\beta$ for the performance of the SICBO algorithm for solving the global minimizer of \eqref{problem}. Eight considered objective functions with the dimension of $d=3$ are listed in Table \ref{function}. Note that these functions have many local minimizers and a unique global minimizer $x^*=(0,\ldots,0)^{\mathrm{T}}\in\mathbb{R}^d$ with the optimal function value $0$.
\begin{table}[h!]
	\caption{Considered test functions in Example \ref{ex-3}}
	\centering
	\resizebox{0.8\textwidth}{!}{
		\begin{tabular}{c||c} 
			\hline
			& Objective function $f$ in problem \eqref{problem} \\
			\hline 
			$f_1$ & $\frac{1}{d}\sum_{l=1}^d[|x_l|-10\cos(2\pi x_l)+10]$ \\ [2ex]
			$f_2$ & $-10\exp\left( -0.2\sqrt{\frac{1}{d}\sum_{l=1}^d |x_l|}\right)-\exp\left(\frac{1}{d}\sum_{l=1}^d\cos(2\pi x_l) \right)+10+\exp(1)$ \\ [2ex]
			$f_3$ & $\left[\sum_{l=1}^d \sin^2(x_l)-\exp\left( -\sum_{l=1}^d x_l^2\right)\right]\exp\left( -\sum_{l=1}^d\sin^2(\sqrt{|x_l|})\right)+1$ \\ [2ex]
			$f_4$ & $\frac{1}{4000}\sum_{l=1}^d|x_l|-\prod_{l=1}^d\cos\left(\frac{x_l}{\sqrt{l}}\right)+1$ \\ [2ex]
			$f_5$ & $\sum_{l=1}^d|x_l|+\prod_{l=1}^d|x_l|$ \\ [2ex]
			$f_6$ & $10\left\|x\sin(10x)-0.1x\right\|_1$\\ [2ex]
			$f_7$ & $1-\prod_{l=1}^d\cos(x_l)\exp(-|x_l|)$\\ [2ex]
			$f_8$ & $1-\cos\left(2\pi\sqrt{\sum_{l=1}^dx_l^2} \right)+0.1\sqrt{\|x\|_1} $\\ [2ex]
			\hline
	\end{tabular}}
	\label{function}
\end{table}

The smoothing functions of these test functions are constructed by \eqref{smoothing} and the following parameters are used for the simulations
\begin{equation*}
	\mathcal{S}=[-3,3]^d,\quad\zeta=0.1,\quad\gamma=0.01,\quad \mu_k=\frac{1}{(1+k)^2}.
\end{equation*} 
And we consider a run successful when $\max_{1\leq i\leq N}\left\| x^{i,k}-x^*\right\| <1\mathrm{e}$-$02$. Tables \ref{table3} and \ref{table4} report the success rates (\texttt{suc-rat}), solution errors (\texttt{sol-err}) defined by $\frac{1}{N}\sum_{i=1}^N\left\| x^{i,k}-x^*\right\|^2$, and objective function value errors (\texttt{fun-err}) calculated by $\frac{1}{N}\sum_{i=1}^N \left| f(x^{i,k})-f(x^*)\right|$, averaged over 100 runs.  In detail, Table \ref{table3} shows the numerical results obtained by the SICBO algorithm with $\beta=1\mathrm{e}\mathrm{+}15$ when different $N$ are given. Table \ref{table4} presents the performance by the SICBO algorithm with $N=200$ for different values of $\beta$. The above results indicate that increasing $N$ and $\beta$ can effectively improve the performance of the proposed algorithm, enabling it to accurately find the global minimizer of nonconvex and nonsmooth objective functions.
\begin{table}[h!]
	\caption{Results of the SICBO algorithm with different values of $N$ for Example \ref{ex-3}.} \label{table3}
	\begin{center}
		\resizebox{0.77\textwidth}{!}{
			\begin{tabular}{cccccc}
				\hline
				function&~&$N=50$&$N=100$&$N=200$&$N=400$\\
				\hline
				$f_1$&\texttt{suc-rat}&$38\%$&$75\%$&$93\%$&$\textbf{97\%}$\\
				~&\texttt{sol-err}&$4.92\mathrm{e}$-$01$&$2.00\mathrm{e}$-$01$&$\bf{4.31}\mathrm{\bf{e}}$-$\bf{05}$&$1.66\mathrm{e}$-$04$\\
				~&\texttt{fun-err}&$1.94\mathrm{e}$-$01$&$8.62\mathrm{e}$-$02$&$\bf{4.58}\mathrm{\bf{e}}$-$\bf{03}$&$1.26\mathrm{e}$-$02$\\
				\hline
				$f_2$&\texttt{suc-rat}&$47\%$&$56\%$&$79\%$&$\textbf{100\%}$\\
				~&\texttt{sol-err}&$3.98\mathrm{e}$-$03$&$9.94\mathrm{e}$-$02$&$5.13\mathrm{e}$-$05$&$\bf{8.85}\mathrm{\bf{e}}$-$\bf{06}$\\
				~&\texttt{fun-err}&$2.80\mathrm{e}$-$01$&$2.54\mathrm{e}$-$01$&$5.57\mathrm{e}$-$02$&$\bf{3.69}\mathrm{\bf{e}}$-$\bf{02}$\\
				\hline
				$f_3$&\texttt{suc-rat}&$34\%$&$75\%$&$93\%$&$\textbf{98\%}$\\
				~&\texttt{sol-err}&$4.80\mathrm{e}$-$03$&$2.72\mathrm{e}$-$04$&$\bf{3.34}\mathrm{\bf{e}}$-$\bf{05}$&$5.27\mathrm{e}$-$05$\\
				~&\texttt{fun-err}&$5.52\mathrm{e}$-$02$&$9.80\mathrm{e}$-$03$&$\bf{1.27}\mathrm{\bf{e}}$-$\bf{03}$&$3.70\mathrm{e}$-$03$\\
				\hline
				$f_4$&\texttt{suc-rat}&$51\%$&$75\%$&$91\%$&$\textbf{100\%}$\\
				~&\texttt{sol-err}&$3.52\mathrm{e}$-$03$&$7.28\mathrm{e}$-$04$&$9.85\mathrm{e}$-$05$&$\bf{1.75}\mathrm{\bf{e}}$-$\bf{05}$\\
				~&\texttt{fun-err}&$6.96\mathrm{e}$-$04$&$3.57\mathrm{e}$-$04$&$2.20\mathrm{e}$-$05$&$\bf{3.38}\mathrm{\bf{e}}$-$\bf{06}$\\
				\hline
				$f_5$&\texttt{suc-rat}&$52\%$&$68\%$&$87\%$&$\textbf{98\%}$\\
				~&\texttt{sol-err}&$9.17\mathrm{e}$-$03$&$9.54\mathrm{e}$-$04$&$1.07\mathrm{e}$-$04$&$\bf{2.94}\mathrm{\bf{e}}$-$\bf{06}$\\
				~&\texttt{fun-err}&$6.17\mathrm{e}$-$02$&$2.09\mathrm{e}$-$02$&$6.31\mathrm{e}$-$03$&$\bf{3.50}\mathrm{\bf{e}}$-$\bf{03}$\\
				\hline
				$f_6$&\texttt{suc-rat}&$55\%$&$66\%$&$86\%$&$\textbf{91\%}$\\
				~&\texttt{sol-err}&$2.78\mathrm{e}$-$03$&$7,36\mathrm{e}$-$04$&$7.67\mathrm{e}$-$05$&$\bf{3.14}\mathrm{\bf{e}}$-$\bf{05}$\\
				~&\texttt{fun-err}&$2.26\mathrm{e}$-$01$&$8.23\mathrm{e}$-$02$&$9.69\mathrm{e}$-$03$&$\bf{6.84}\mathrm{\bf{e}}$-$\bf{03}$\\
				\hline
				$f_7$&\texttt{suc-rat}&$50\%$&$69\%$&$93\%$&$\textbf{98\%}$\\
				~&\texttt{sol-err}&$4.03\mathrm{e}$-$02$&$5.07\mathrm{e}$-$03$&$1.37\mathrm{e}$-$05$&$\bf{4.99}\mathrm{\bf{e}}$-$\bf{06}$\\
				~&\texttt{fun-err}&$3.28\mathrm{e}$-$02$&$4.93\mathrm{e}$-$02$&$2.14\mathrm{e}$-$03$&$\bf{1.34}\mathrm{\bf{e}}$-$\bf{03}$\\
				\hline
				$f_6$&\texttt{suc-rat}&$33\%$&$42\%$&$73\%$&$\textbf{89\%}$\\
				~&\texttt{sol-err}&$3.98\mathrm{e}$-$01$&$5.97\mathrm{e}$-$01$&$2.99\mathrm{e}$-$01$&$\bf{1.24}\mathrm{\bf{e}}$-$\bf{04}$\\
				~&\texttt{fun-err}&$6.00\mathrm{e}$-$02$&$6.38\mathrm{e}$-$02$&$3.35\mathrm{e}$-$02$&$\bf{8.22}\mathrm{\bf{e}}$-$\bf{03}$\\
				\hline
		\end{tabular}}
	\end{center}
\end{table}
\begin{table}[h!]
	\caption{Results of the SICBO algorithm with different values of $\beta$ for Example \ref{ex-3}.} \label{table4}
	\begin{center}
		\resizebox{0.8\textwidth}{!}{
			\begin{tabular}{cccccc}
				\hline
				function&~&$\beta=1\mathrm{e}$+$08$&$\beta=1\mathrm{e}$+$12$&$\beta=1\mathrm{e}$+$16$&$\beta=1\mathrm{e}$+$20$\\
				\hline
				$f_1$&\texttt{suc-rat}&$48\%$&$65\%$&$81\%$&$\textbf{99\%}$\\
				~&\texttt{sol-err}&$3.03\mathrm{e}$-$01$&$1.99\mathrm{e}$-$01$&$1.97\mathrm{e}$-$01$&$\bf{1.59}\mathrm{\bf{e}}$\bf{-}$\bf{05}$\\
				~&\texttt{fun-err}&$1.27\mathrm{e}$-$01$&$8.67\mathrm{e}$-$02$&$6.83\mathrm{e}$-$02$&$\bf{2.14}\mathrm{\bf{e}}$\bf{-}$\bf{03}$\\
				\hline
				$f_2$&\texttt{suc-rat}&$76\%$&$73\%$&$83\%$&$\textbf{97\%}$\\
				~&\texttt{sol-err}&$3.68\mathrm{e}$-$04$&$7.06\mathrm{e}$-$04$&$1.78\mathrm{e}$-$04$&$\bf{6.85}\mathrm{\bf{e}}$\bf{-}$\bf{06}$\\
				~&\texttt{fun-err}&$1.04\mathrm{e}$-$01$&$1.10\mathrm{e}$-$01$&$8.54\mathrm{e}$-$02$&$\bf{3.66}\mathrm{\bf{e}}$\bf{-}$\bf{02}$\\
				\hline
				$f_3$&\texttt{suc-rat}&$69\%$&$80\%$&$94\%$&$\textbf{99\%}$\\
				~&\texttt{sol-err}&$1.56\mathrm{e}$-$04$&$1.65\mathrm{e}$-$04$&$\bf{1.87}\mathrm{\bf{e}}$\bf{-}$\bf{0}5$&$2.80\mathrm{e}$-$05$\\
				~&\texttt{fun-err}&$8.17\mathrm{e}$-$03$&$7.16\mathrm{e}$-$03$&$3.99\mathrm{e}$-$03$&$\bf{3.01}\mathrm{\bf{e}}$\bf{-}$\bf{03}$\\
				\hline
				$f_4$&\texttt{suc-rat}&$69\%$&$78\%$&$\textbf{100\%}$&$\textbf{100\%}$\\
				~&\texttt{sol-err}&$2.64\mathrm{e}$-$03$&$7.01\mathrm{e}$-$05$&$\bf{2.85}\mathrm{\bf{e}}$\bf{-}$\bf{06}$&$1.58\mathrm{e}$-$05$\\
				~&\texttt{fun-err}&$6.62\mathrm{e}$-$04$&$1.36\mathrm{e}$-$05$&$\bf{1.29}\mathrm{\bf{e}}$\bf{-}$\bf{06}$&$4.78\mathrm{e}$-$06$\\
				\hline
				$f_5$&\texttt{suc-rat}&$66\%$&$62\%$&$71\%$&$\textbf{96\%}$\\
				~&\texttt{sol-err}&$4.16\mathrm{e}$-$04$&$9.53\mathrm{e}$-$03$&$1.19\mathrm{e}$-$03$&$\bf{6.77}\mathrm{\bf{e}}$\bf{-}$\bf{05}$\\
				~&\texttt{fun-err}&$1.33\mathrm{e}$-$02$&$5.21\mathrm{e}$-$02$&$1.54\mathrm{e}$-$02$&$\bf{4.17}\mathrm{\bf{e}}$\bf{-}$\bf{03}$\\
				\hline
				$f_6$&\texttt{suc-rat}&$62\%$&$62\%$&$85\%$&$\textbf{89\%}$\\
				~&\texttt{sol-err}&$2.04\mathrm{e}$-$03$&$1.10\mathrm{e}$-$03$&$4.28\mathrm{e}$-$04$&$\bf{1.53}\mathrm{\bf{e}}$\bf{-}$\bf{05}$\\
				~&\texttt{fun-err}&$1.68\mathrm{e}$-$01$&$1.15\mathrm{e}$-$01$&$3.52\mathrm{e}$-$02$&$\bf{4.53}\mathrm{\bf{e}}$\bf{-}$\bf{03}$\\
				\hline
				$f_7$&\texttt{suc-rat}&$66\%$&$76\%$&$71\%$&$\textbf{86\%}$\\
				~&\texttt{sol-err}&$2.27\mathrm{e}$-$03$&$4.24\mathrm{e}$-$04$&$4.51\mathrm{e}$-$03$&$\bf{2.29}\mathrm{\bf{e}}$\bf{-}$\bf{04}$\\
				~&\texttt{fun-err}&$3.09\mathrm{e}$-$02$&$1.22\mathrm{e}$-$02$&$4.97\mathrm{e}$-$02$&$\bf{1.12}\mathrm{\bf{e}}$\bf{-}$\bf{02}$\\
				\hline
				$f_6$&\texttt{suc-rat}&$55\%$&$75\%$&$67\%$&$\textbf{87\%}$\\
				~&\texttt{sol-err}&$3.98\mathrm{e}$-$01$&$2.19\mathrm{e}$-$01$&$3.97\mathrm{e}$-$01$&$\bf{1.98}\mathrm{\bf{e}}$\bf{-}$\bf{01}$\\
				~&\texttt{fun-err}&$4.47\mathrm{e}$-$02$&$2.61\mathrm{e}$-$02$&$4.15\mathrm{e}$-$02$&$\bf{2.55}\mathrm{\bf{e}}$\bf{-}$\bf{02}$\\
				\hline
		\end{tabular}}
	\end{center}
\end{table}
\end{example}

\begin{example}\label{ex-4}
In this example, we evaluate the numerical behaviors of the SICBO algorithm for training a class of deep neural networks, which can be modeled by
\begin{equation}\label{dnn}
	\min_{W,b} \frac{1}{M}\sum_{m=1}^M\left\| \sigma(W_L\sigma(\cdots\sigma(W_1u_m+b_1)+\cdots)+b_L)-y_m\right\|^2, 
\end{equation}
where $\left\lbrace u_m\in\mathbb{R}^{N_0}\right\rbrace_{m=1}^M$ and $\left\lbrace y_m\in\mathbb{R}^{N_L}\right\rbrace_{m=1}^M$ are the given data, variables $\{W_\ell\in\mathbb{R}^{N_\ell\times N_{\ell-1}}\}_{\ell=1}^L$ represent the weight matrices, $\{b_\ell\in\mathbb{R}^{N_\ell}\}_{\ell=1}^L$  represent the bias vectors, and $\sigma(\cdot)$ stands for ReLU activation function, i.e. $\sigma(s):=\max\{0,s\}$.

We randomly generate some test problems in the following way. Denote $M$ and $M_{test}$ the numbers of training and test samplings, respectively. The output data $\left\lbrace y_m\in\mathbb{R}^{N_L}\right\rbrace_{m=1}^{M+M_{test}}$ are generated from an $L$-layer neural network with the similar way to the method proposed in \cite{WeiAn2023}, i.e.,
\begin{equation*}
	y_m=\sigma(W_L\sigma(\cdots\sigma(W_1u_m+b_1)+b_2\cdots)+b_L)+\tilde{y}_m,
\end{equation*}
where $u_m\sim\mathcal{N}(\xi,\Sigma^\mathrm{T}\Sigma)$, $\xi={\rm{randn}}(N_0,1)$, $\Sigma={\rm{randn}}(N_0,1)$ and $\tilde{y}_m=0.05*\rm{randn}(1,1)$. Here ${\rm{randn}}(n,m)$ denotes an $n\times m$ randomly generated matrix which follows the standard Gaussian distribution. And we set $M=80$, $M_{test}=20$, $L=4$, $N_0=5$, $N_1=N_2=N_3=10$ and $N_4=1$.

We introduce the following two measurements (training error \texttt{Errtrain} and test error \texttt{Errtest}), to evaluate the performance of our algorithm:
\begin{equation*}
	\mbox{\texttt{Errtrain}}=\frac{1}{M}\sum_{m=1}^M \left\| \sigma(W_L\sigma(\cdots\sigma(W_1u_m+b_1)+b_2\cdots)+b_L)-y_m\right\|^2, 
\end{equation*}
\begin{equation*}
	\mbox{\texttt{Errtest}}=\frac{1}{M_{test}}\sum_{m=M+1}^{M+M_{test}} \left\| \sigma(W_L\sigma(\cdots\sigma(W_1u_m+b_1)+b_2\cdots)+b_L)-y_m\right\|^2.
\end{equation*}

For the nonsmooth function in \eqref{dnn}, we choose 
\begin{equation*}
	\varphi_2(s,\mu)=
	\begin{cases}
		\displaystyle \max\{0,s\}, &\mbox{if}~|s|\geq\mu/2\\
		\displaystyle \frac{s^2}{2\mu}+\frac{s}{2}+\frac{\mu}{8}, &\mbox{if}~|s|<\mu/2\\
	\end{cases}
\end{equation*}
as a smoothing function of $\sigma(s)$ to get $\tilde{f}$ in the SICBO algorithm. And we also randomly initialize the variables $(W,b)$ with $
\mathcal{S}=[-1,1]^d$, $d=291$ and set the parameters as
\begin{equation*}
	N=200,\quad \beta=1\mathrm{e}\mbox{\rm{+}}20, \quad\zeta=0.1,\quad\gamma=0.01,\quad \mu_k=\mathrm{e}^{0.1k},\quad \varepsilon_1=\varepsilon_2=1\mathrm{e}\mbox{\rm{-}}06.
\end{equation*}
Table \ref{table5} shows the numerical results of the SICBO algorithm for training the four-layer neural network with the ReLU activation function (averaged over $50$ simulations), including the \texttt{Errtrain}, \texttt{Errtest} and number of iterations. Fig. \ref{fig:errpic} displays the averaged \texttt{Errtrain} and \texttt{Errtest} of $200$ particles with the x-axis varying on iterations in one of the $50$ simulations. These results suggest that the SICBO algorithm can effectively reduce the training and test error for problem \eqref{dnn} to a satisfactory level.
\begin{table}[h!]
	\caption{Numerical results of the SICBO algorithm averaged over  $50$ simulations for Example \ref{ex-4}.} \label{table5}
	\begin{center}
		\begin{tabular}{ccc}
			\hline
			\texttt{Errtrain}  &\texttt{Errtest}  &\texttt{Iteration}  \\
			\hline
			$3.64\mathrm{e}$-$03$  &$3.66\mathrm{e}$-$03$  &$482$  \\
			\hline
		\end{tabular}
	\end{center}
\end{table}
\begin{figure}[h]
	\centering
	\includegraphics[width=0.7\linewidth]{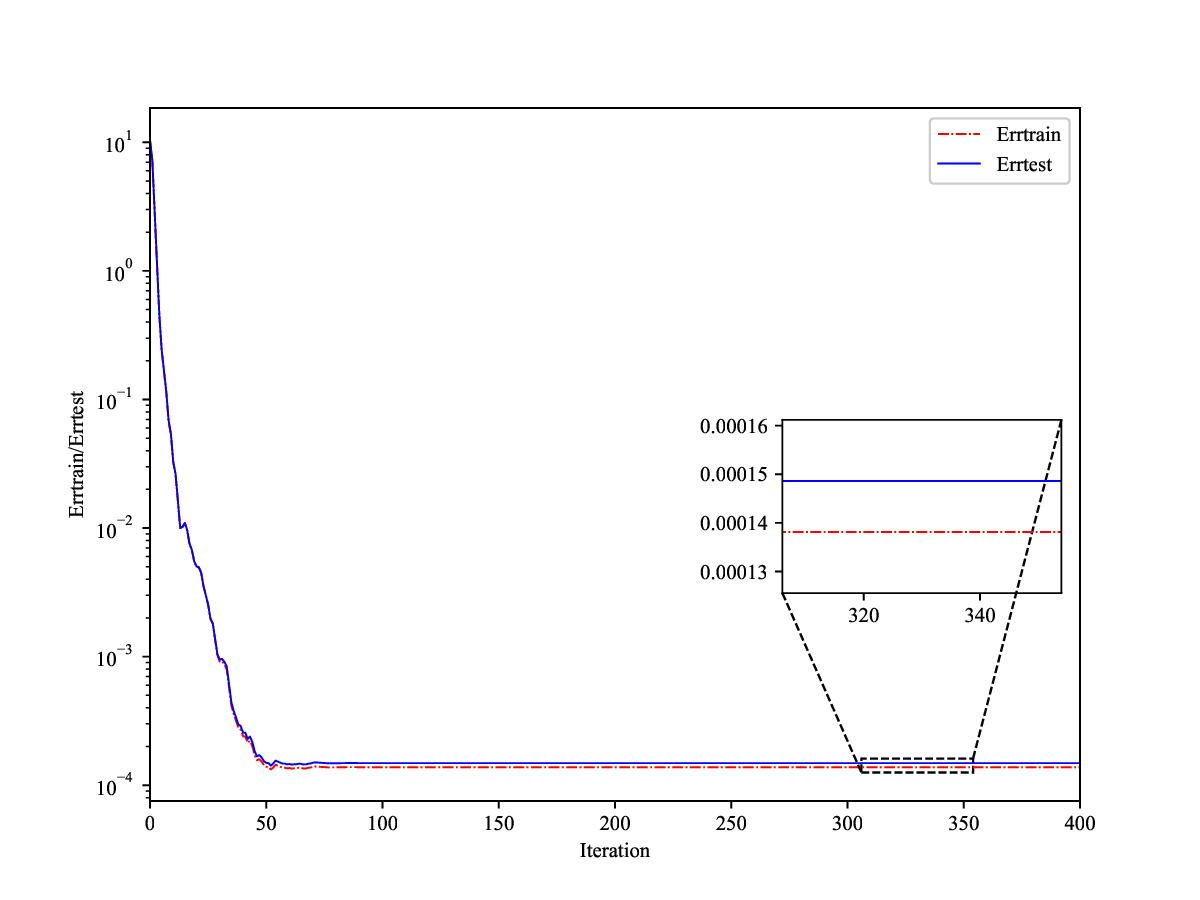}
	\caption{Averaged training error and test error in one of the $50$ simulations.}
	\label{fig:errpic}
\end{figure}
\end{example}
\section {Conclusions}\label{sec7}
We have proposed the SICBO algorithm for solving the global minimizer of the general unconstrained nonconvex, continuous but not necessary Lipschitz continuous optimization problems. A rigorous convergence analysis has been provided without using corresponding mean-field model. This algorithm utilizes smoothing method to overcome the nonsmoothness of objective function, and combines with the consensus-based optimization method to achieve the global optimization. 
It has been proved that the iterative process of the SICBO algorithm can establish global consensus. And it exponentially converges to the common consensus point in expectation and almost surely. 
We have further given a more detailed error estimate to show that the objective function value at the consensus point is close enough to the global minimum. Some numerical experiments have been presented to illustrate the theoretical results and demonstrate the appreciable performance of the proposed SICBO algorithm, including the reported results on some classes of test functions and a class of deep neural network training problems.

\begin{acknowledgements}
 The research of this author is partially supported by the National Natural Science Foundation of China grants (12425115, 12271127, 62176073).
\end{acknowledgements}

%
\section*{Data Availability}
The datasets generated during the current study are available from the corresponding author on reasonable request.

 \section*{Declarations}

 \textbf{Conflict of interest} The authors have not disclosed any competing interests.

\bibliographystyle{spmpsci}      
\bibliography{cbosmoothing}   

\end{document}